\DeclareFixedFootnote{\repnote}{Dipartimento di Scienze Matematiche “Giuseppe Luigi Lagrange”, Dipartimento di Eccellenza 2018-2022, Politecnico di Torino, Corso Duca degli Abruzzi 24, 10129 Torino Italy.}
\theoremstyle{plain}
\newtheorem{thm}{Theorem}[section]
\newtheorem{lem}[thm]{Lemma}
\newtheorem{prop}[thm]{Proposition}
\theoremstyle{definition}
\newtheorem{oss}[thm]{Remark}
\theoremstyle{remark}
\DeclareMathOperator{\argmin}{argmin}
\DeclareMathOperator{\argmax}{argmax}
\author{Federico Santagati$^1$\footnote{federico.santagati@polito.it}}
\title{{\bf Hardy spaces on homogeneous trees \\ with  flow measures}}
\date{\vspace{-5ex}}
\begin{document}

\unmarkedfntext{{\em 2020 Mathematics Subject Classification}: 05C05; 05C21; 30H10; 35K08; 42B25; 43A99.\newline\hspace*{0.5em}{\em Keywords}: Trees; nondoubling measure; heat kernel; Hardy spaces; maximal function.\newline\hspace*{0.5em}$^1$: Dipartimento di Scienze Matematiche “Giuseppe Luigi Lagrange”, Dipartimento di Eccellenza 2018-2022, Politecnico di Torino, Corso Duca degli Abruzzi 24, 10129 Torino Italy.\newline}

\maketitle

  \begin{abstract}
 We consider a homogeneous tree endowed with a nondoubling flow measure $\mu$ of exponential growth and a probabilistic Laplacian  $\mathcal{L}$ self-adjoint with respect to $\mu$. We prove that the maximal characterization in terms of the heat and the Poisson semigroup of $\mathcal{L}$ and the Riesz transform characterization of the atomic Hardy space introduced in a previous work fail.
\end{abstract}

\section{Introduction} Let $H^1(\mathbb{R}^n)$ be the Hardy space  defined by
\begin{align*}
    H^1(\mathbb{R}^n)=\{ f \in L^1(\mathbb{R}^n) \ : \ |\nabla \Delta^{-1/2}f| \in L^1(\mathbb{R}^n)\},
\end{align*}
where $\nabla$ denotes the standard Euclidean gradient and $\Delta$ denotes the standard positive Euclidean Laplacian. 
It is a well-known fact that $H^1(\mathbb{R}^n)$ can be defined in  several equivalent ways. Indeed, a celebrated result of C. Fefferman and E.M. Stein \cite{FS, S} states the equivalence between $H^1(\mathbb{R}^n)$ and the maximal Hardy spaces defined via the heat semigroup and the Poisson semigroup of the Euclidean Laplacian. 
This result deeply depends on the doubling property of the Euclidean setting. Moreover, R. Coifman proved in \cite{Coif} that $H^1(\mathbb{R}^n)$ admits an atomic characterization. Subsequently, Coifman and G. Weiss \cite{CW} introduced an atomic Hardy space in the setting of spaces of homogeneous type; we refer to  \cite{HHLLYY, GLY, Uch, YZ} for various  maximal characterizations of the Hardy space in the context of such  spaces.  It is worth mentioning that, on   $\mathbb{R}^n$ endowed with a nondoubling  measure  of
polynomial growth, X. Tolsa  \cite{Tol03} introduced an atomic Hardy space and  proved that
it can be characterized by a  maximal operator as in the doubling setting. Furthermore, G. Mauceri and S. Meda defined an atomic Hardy space in the context of a Gaussian measure and the Ornstein–Uhlenbeck operator in $\mathbb{R}^n$ and, in the one-dimensional case, a maximal characterization of this space was proved in \cite{MMS}. Many efforts have been made in order to study nondoubling (both continuous and discrete) settings on which these characterizations fail. See for example \cite{SV2,SV, MMV2} for a contribution on a Lie group of exponential growth and on locally doubling manifolds and \cite{CM} for similar results in the context of a homogeneous tree and the combinatorial Laplacian.  \\ In \cite{LSTV} the authors define an atomic Hardy space on a tree endowed with a nondoubling, locally doubling flow measure and they prove some classical results such as the duality between $H^1$ and $BMO$ and good interpolation properties. In this paper, we focus on  the homogeneous tree $\mathbb{T}_{q+1}=(V,E)$  of order $q+1$, i.e., a tree in which every vertex has exactly $q+1$ neighbours. We consider the  metric measure space $(V, d, \mu)$ where  $d$ is the usual discrete distance on a graph and the measure $\mu$ is the canonical flow measure on a homogeneous tree (see Section \ref{lap} for a precise definition). It is worth recalling that  $(V, d, \mu)$ is of exponential growth and does not satisfy the Cheeger isoperimetric inequality (we refer to \cite{LSTV} Section 2). \\ Inspired by \cite{hs}, in Section \ref{lap} we introduce a Laplacian $\mathcal{L}$ self-adjoint on $L^2(\mu)$ that can be thought of as the natural Laplacian in this setting. We  define the heat semigroup $(\mathcal{H}_t)_{t>0}$ and the Poisson semigroup $(\mathcal{P}_t)_{t>0}$ associated with $\mathcal{L}$, given respectively  by $\mathcal{H}_t=e^{-t\mathcal{L}}$ and $\mathcal{P}_t=e^{-t\sqrt{\mathcal{L}}}$. It is a natural  task to investigate whether the Hardy spaces defined in terms of the heat semigroup and the Poisson semigroup  are equivalent to the atomic Hardy space $H^1_{at}(\mu)$ defined in \cite{LSTV} or the equivalent Hardy space defined in \cite{ATV1} (see Subsection \ref{athasp} for its definition). We define the heat maximal operator and the Poisson maximal operator as \begin{align}\label{defmht}
    \mathcal{M}_hf=\sup_{t>0}|\mathcal{H}_tf|,
\end{align} \begin{align}\label{defmpt}
    \mathcal{M}_Pf=\sup_{t>0}|\mathcal{P}_tf|,
\end{align} respectively. The aim of the first part of this work is to establish that the spaces
\begin{align*}
    &H^1_h(\mu)=\{f \in L^1(\mu) \ : \mathcal{M}_hf \in L^1(\mu)\}, \ \ \| f\|_{H^1_h}=\| f \|_1+ \| \mathcal{M}_hf\|_1,\\ 
    &H^1_P(\mu)=\{f \in L^1(\mu) \ : \mathcal{M}_Pf \in L^1(\mu)\}, \ \ \| f\|_{H^1_P}=\| f \|_1+ \| \mathcal{M}_Pf\|_1,
\end{align*} 
do not coincide with the atomic Hardy spaces $H^1_{at}(\mu)$ defined in \cite{LSTV}
(see Section \ref{athasp} for its precise definition).
 The following theorem is one of the main results of this work. It states that,  although the inclusions $H_{at}^1(\mu) \subset H^1_h(\mu), H_{at}^1(\mu) \subset H^1_P(\mu)$ are valid,  the maximal characterizations of the atomic Hardy space fail in our setting.
\begin{thm}\label{th1} i)  There exists a positive constant $C$ such that
\begin{align*}
    \|\mathcal{M}_hf\|_1 \le C \|f\|_{H^1_{at}} \qquad{\forall f \in H^1_{at}(\mu);}
\end{align*}
 ii) there exists a positive constant $C$ such that
\begin{align*}
    \|\mathcal{M}_Pf\|_1 \le C \|f\|_{H^1_{at}}  \qquad{\forall f \in H^1_{at}(\mu);}
\end{align*}
 iii) there exists a function $g \in H^1_h(\mu) \cap H^1_P(\mu)$ which does not belong to $H^1_{at}(\mu)$.
\end{thm}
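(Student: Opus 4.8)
I would treat the two estimates together, since they are of the same nature. The first observation is that $(\mathcal{H}_t)_{t>0}$ and $(\mathcal{P}_t)_{t>0}$ are symmetric submarkovian semigroups on $(V,\mu)$: each is self‑adjoint on $L^2(\mu)$, positivity preserving and contractive on $L^\infty(\mu)$, the Poisson semigroup being obtained from the heat semigroup by Bochner subordination. Stein's maximal theorem for symmetric diffusion semigroups then gives that $\mathcal{M}_h$ and $\mathcal{M}_P$ are bounded on $L^2(\mu)$. In view of this $L^2$ bound, the standard reduction (the analogue in this setting of the Meda--Sjögren--Vallarino criterion, available here through \cite{LSTV, ATV1}) shows that it suffices to prove a uniform estimate $\|\mathcal{M}_h a\|_1\le C$ and $\|\mathcal{M}_P a\|_1\le C$ for every $H^1_{at}(\mu)$‑atom $a$, and then to pass to a general $f\in H^1_{at}(\mu)$ by sublinearity together with an approximation argument in which the $L^2$‑boundedness justifies interchanging the maximal operator with the (a priori only $L^1$‑convergent) atomic sum.

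For the uniform estimate on an atom $a$ supported on an admissible set $R$, with $\|a\|_\infty\le\mu(R)^{-1}$ and $\int a\,d\mu=0$, I would split $V$ into an enlargement $R^\ast$ of $R$ chosen compatibly with the flow structure of the admissible sets, and its complement. On $R^\ast$, the Cauchy--Schwarz inequality and the $L^2$‑bound give $\|\mathcal{M}_h a\|_{L^1(R^\ast)}\le\mu(R^\ast)^{1/2}\|\mathcal{M}_h a\|_2\le C\mu(R^\ast)^{1/2}\|a\|_2\le C(\mu(R^\ast)/\mu(R))^{1/2}\le C$, using the local doubling property of $\mu$ to control $\mu(R^\ast)$ by $\mu(R)$. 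On $V\setminus R^\ast$ I would use the cancellation of $a$ to write $\mathcal{H}_t a(x)=\sum_y\bigl(h_t(x,y)-h_t(x,y_R)\bigr)a(y)\,\mu(y)$ for a center $y_R$ of $R$, so that $\|\mathcal{M}_h a\|_{L^1(V\setminus R^\ast)}$ is dominated by $\sum_{x\notin R^\ast}\mu(x)\,\sup_{t>0}\sup_{y\in R}\lvert h_t(x,y)-h_t(x,y_R)\rvert$; the conclusion then follows from an integrated discrete‑gradient estimate for the heat kernel of $\mathcal{L}$, combined with the Gaussian‑type off‑diagonal decay and the exponential decay in $t$ coming from the spectral gap of $\mathcal{L}$ (the kernel estimates recalled in Section \ref{lap}, cf. \cite{hs}). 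The same scheme, with $h_t$ replaced by the Poisson kernel, which inherits analogous bounds by subordination, yields (ii). I expect the off‑diagonal kernel‑difference estimate together with its summability against $\mu$ over the exponentially growing complement of $R^\ast$ to be the main technical point of this part.

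\textbf{Strategy for (iii).} Here the inclusions $H^1_{at}(\mu)\subset H^1_h(\mu)\cap H^1_P(\mu)$ from (i)--(ii) have to be shown to be strict, and a comparison of norms shows that a counterexample cannot be built as an $\ell^1$‑convergent combination of genuine atoms: for such a combination the $H^1_h$‑norm and the atomic norm would be comparable. Instead I would build $g$ as a normalized, bounded, mean‑zero function obtained along a geodesic ray escaping to infinity, supported on a union of pieces whose shapes become increasingly ill‑adapted to the trapezoidal admissible sets of \cite{LSTV} (so that tiling them by admissible atoms becomes more and more expensive), and whose $L^1(\mu)$‑norms are summable so that $g\in L^1(\mu)$. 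The heat and Poisson maximal estimates $\mathcal{M}_h g,\mathcal{M}_P g\in L^1(\mu)$ would be obtained by the same localization and off‑diagonal decay used in (i)--(ii): each piece contributes a bounded amount on a controlled enlargement plus a rapidly decaying tail, and the contributions are summable because the enlargements are essentially disjoint and far apart.

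To show $g\notin H^1_{at}(\mu)$ I would invoke the duality $H^1_{at}(\mu)^\ast=BMO(\mu)$ from \cite{LSTV}: it suffices to exhibit a single $\varphi\in BMO(\mu)$ with $\|\varphi\|_{BMO}\le1$ for which $\langle g_N,\varphi\rangle\to\infty$ along the truncations $g_N$ of $g$, forcing $\|g_N\|_{H^1_{at}}\to\infty$. The function $\varphi$ is built so as to oscillate by a fixed amount across each of the ill‑adapted supporting pieces of $g$, in the direction paired with $g$, while keeping $\|\varphi\|_{BMO}$ bounded precisely because no admissible trapezoid of the tree sees more than a controlled part of that oscillation; this is exactly the mechanism by which $BMO(\mu)$ is strictly larger, hence $H^1_{at}(\mu)$ strictly smaller, than a maximal Hardy space would permit. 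The crux of (iii) is to carry out this construction so that the heat and Poisson maximal bounds and the divergence of the $BMO$‑pairing hold \emph{simultaneously}: the quantitative balancing of the normalization of $g$ against the sizes and positions of its supporting pieces, and the matching of the oscillation of $\varphi$ with the flow structure of $\mu$, is where the exponential growth and nondoubling nature of $(V,d,\mu)$ enter decisively.
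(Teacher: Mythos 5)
Your outline for parts (i) and (ii) follows essentially the paper's route: a global $L^p$ bound for the maximal operator, Cauchy--Schwarz on an enlargement $R^\ast$ of the support of the atom, and a cancellation/gradient argument off $R^\ast$; the paper gets the global bound as a weak type $(1,1)$ estimate from the Hopf--Dunford--Schwartz theorem and deduces (ii) simply from the pointwise domination $\mathcal{M}_Pf\le\mathcal{M}_hf$ given by subordination. Two ingredients of your sketch need repair, though. First, there is no spectral gap for $\mathcal{L}$ on $L^2(\mu)$: by \eqref{conj} its spectrum is $[0,2]$, and $\sum_y H_t(x,y)\mu(y)=1$ for every $t$, so no exponential decay in $t$ is available. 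The summability of the kernel-difference bound over the exponentially growing complement of $R^\ast$ comes instead from the exact algebraic form of the prefactor $Q(x,y)=q^{-(\ell(x)+\ell(y)+d(x,y))/2}$, which offsets the growth of $\mu$ on spheres, combined with the polynomial gain $(d(x,y)+1)^{-3}$ in the gradient estimate of Lemma \ref{ktl}. Second, the cancellation argument with $H_t(x,y)-H_t(x,y_R)$ works cleanly only for $x\not\le y_R$, where $y\mapsto H_t(x,y)$ is radial on $R$ and one can telescope along predecessors; on the part of $(R^\ast)^c$ lying \emph{below} $y_R$ this radiality fails, and the paper drops cancellation there and uses only the size bound \eqref{ineqsquare}. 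Your uniform treatment of all of $(R^\ast)^c$ by cancellation would need a separate justification in that region.

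For (iii) there is a genuine gap: you describe the desired shape of a counterexample but not the counterexample itself, and the mechanism you propose (supports increasingly ill-adapted to trapezoids) is not the one that makes the argument close. The paper takes $g_n=\delta_{x_n}-\delta_o$ with $x_n$ at level $0$ and $|x_n|\approx\log n$, and the whole point is the quantitative gap between two estimates: $\|g_n\|_{H^1_{at}}\gtrsim\log n$, obtained by pairing with the explicit $BMO$ function \eqref{fs} (which grows linearly in the level of the confluent with $\xi_0$), versus $\|\mathcal{M}_hg_n\|_1\lesssim\log\log n$. The latter rests on an \emph{exact} cancellation you do not identify: since $\ell(x_n)=\ell(o)$ and $d(x,x_n)=d(x,o)$ for every $x\not\le x_n\wedge o$, the two heat kernels coincide there by radiality and $\mathcal{M}_h(\delta_{x_n}-\delta_o)$ vanishes identically outside the cone below $x_n\wedge o$; what remains contributes only $\log|x_n|\approx\log\log n$. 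With $c_k=1/(k(\log k)^{3/2})$ the function $g=\sum_kc_kg_k$ then lies in $H^1_h\cap H^1_P$ because $\sum_kc_k\log\log k<\infty$, but not in $H^1_{at}$ because the $BMO$ pairing gives $\sum_kc_k\log k=\infty$. (Your remark that an $\ell^1$-convergent combination of atoms cannot work is consistent with this: each $g_k$ decomposes into atoms only with coefficients of total size $\approx\log k$.) Without exhibiting this $\log n$ versus $\log\log n$ discrepancy, the construction cannot be completed, so part (iii) of the proposal remains a plan rather than a proof.
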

It is possible to define the analogue of the Riesz transform   in our  setting, which we shall denote by $\mathcal{R}$  (see Section \ref{Riesz} for its precise definition). We introduce the Riesz Hardy space  $H^1_R(\mu)$  defined by 
\begin{align}\label{numeroRiesz}
    H^1_R(\mu)=\{f \in L^1(\mu) \ : \mathcal{R}f \in L^1(\mu)\}, 
\end{align}
which we endow with the natural norm $\ \ \| f\|_{H^1_R}=\| f \|_1+ \| \mathcal{R}f\|_1.$ \\ 

The following theorem establishes  that  the Riesz characterization of the atomic Hardy space fails. 
\begin{thm}\label{thm2}
 i)  There exists a positive constant $C$ such that
\begin{align*}
    \|\mathcal{R}f\|_1 \le C \|f\|_{H^1_{at}} \qquad{\forall f \in H^1_{at}(\mu);}
\end{align*}

 ii) there exists a function $g \in H^1_R(\mu)$ which does not belong to $H^1_{at}(\mu)$. 
\end{thm}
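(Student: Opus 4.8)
The plan is to establish the boundedness part (i) by testing the Riesz transform $\mathcal{R}$ on atoms and then use the resulting quantitative bound to run a duality/averaging argument that produces the counterexample in (ii). For part (i), I would first recall that $H^1_{at}(\mu)$ (equivalently the space of \cite{ATV1}) is the completion of finite linear combinations of atoms, so by linearity and a standard limiting argument it suffices to prove a uniform estimate $\|\mathcal{R}a\|_1 \le C$ for every atom $a$, together with the a priori continuity of $\mathcal{R}$ on a dense subspace (which follows from its $L^2(\mu)$-boundedness, itself a consequence of spectral calculus for $\mathcal{L}$, since $\mathcal{R}=\partial\,\mathcal{L}^{-1/2}$ or the combinatorial analogue defined in Section \ref{Riesz}). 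Fix an atom $a$ supported on an admissible set (a "tree interval" $R$, or a ball in the flow metric) with $\int a\,d\mu=0$ and $\|a\|_\infty \le \mu(R)^{-1}$. Split $\|\mathcal{R}a\|_1 = \int_{R^*}|\mathcal{R}a|\,d\mu + \int_{(R^*)^c}|\mathcal{R}a|\,d\mu$, where $R^*$ is a fixed dilate of $R$. On $R^*$ use Cauchy--Schwarz and $L^2$-boundedness: $\int_{R^*}|\mathcal{R}a| \le \mu(R^*)^{1/2}\|\mathcal{R}a\|_2 \le C\mu(R^*)^{1/2}\|a\|_2 \le C(\mu(R^*)/\mu(R))^{1/2} \le C$, using that the flow measure is locally doubling so $\mu(R^*)\le C\mu(R)$. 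On $(R^*)^c$ I would use the cancellation of $a$ together with pointwise decay estimates for the kernel $r(x,y)$ of $\mathcal{R}$ and its "gradient" (difference in the $y$ variable along edges): writing $\mathcal{R}a(x)=\sum_y (r(x,y)-r(x,y_0))a(y)\,\mu(y)$ for a center $y_0\in R$, one needs $\sum_{x\in(R^*)^c}|r(x,y)-r(x,y_0)|\,\mu(x) \le C$ uniformly in $y\in R$. This is the classical Calderón--Zygmund-type integral condition; the kernel estimates should follow from the explicit formula for $\mathcal{L}^{-1/2}$ or $\mathcal{P}_t$ via subordination, exploiting the geometry of the tree (the relevant sums are over spheres, whose $\mu$-measure grows/decays geometrically).

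The main obstacle, and the crux of part (ii), is the construction of $g\in H^1_R(\mu)\setminus H^1_{at}(\mu)$. I expect the right strategy mirrors the heat/Poisson counterexample built for Theorem \ref{th1} (part iii): one exploits that the flow measure fails the Cheeger inequality and is nondoubling at large scales, so that there exist functions whose atomic norm blows up while the Riesz transform stays controlled. Concretely, I would look for a sequence of "bad" configurations — e.g. normalized differences $g_N = c_N(\mathbbm{1}_{A_N}-\mathbbm{1}_{B_N})$ supported on two far-apart regions, or sums $\sum_k \varepsilon_k a_k$ of atoms at geometrically separated scales along a fixed geodesic — and show: first, that $\|g_N\|_{H^1_{at}}\to\infty$ (this is where one invokes a lower bound for the atomic norm, perhaps via duality with an explicit $BMO(\mu)$ function, using the $H^1$–$BMO$ duality from \cite{LSTV}, or via a direct combinatorial argument counting the atoms forced by any admissible decomposition); and second, that $\|g_N\|_{H^1_R}$ stays bounded, which requires a more robust bound on $\mathcal{R}g_N$ than the crude atomic estimate — likely a direct kernel computation showing massive cancellation in $\mathcal{R}$ between the pieces, or showing $\mathcal{R}g_N$ is itself (a bounded multiple of) an $L^1$ function by exploiting that $\mathcal{R}$ is, roughly, a bounded operator on $L^1$ along the flow direction while failing to be so transversally. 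A compactness/normal-families argument then extracts a genuine $g$ in the closure. The delicate point is calibrating the geometry of $A_N,B_N$ (or the scales of the $a_k$) so that the two estimates are simultaneously achievable — the nondoubling growth must be strong enough to defeat the atomic norm but the kernel of $\mathcal{R}$ must decay fast enough (in the appropriate, flow-adapted directions) to keep $\|\mathcal{R}g_N\|_1$ bounded. I would carry this out by first writing down the explicit kernel of $\mathcal{R}$ on $\mathbb{T}_{q+1}$ with the flow measure, reading off its anisotropic decay, and only then choosing the test functions to match.
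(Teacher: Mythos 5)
Your treatment of part (i) is essentially the paper's: the paper simply invokes the discrete H\"ormander integral condition for the kernel of $\mathcal{R}$ together with its $L^2(\mu)$-boundedness (citing \cite{ATV1}, \cite{LSTV}, \cite{hs}), which is exactly the $R^*$/$(R^*)^c$ splitting you describe. One caveat: uniform boundedness on atoms alone does not formally yield boundedness on all of $H^1_{at}(\mu)$; the paper's Lemma \ref{lemgeneral} supplements such a bound with a weak type $(1,1)$ hypothesis, and for $\mathcal{R}$ this is subsumed in the cited Calder\'on--Zygmund theory. Your ``standard limiting argument'' should be made precise along those lines, but this is a minor point.

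The genuine gap is in part (ii). You correctly identify the two ingredients --- a lower bound on the atomic norm via duality with an explicit $BMO(\mu)$ function, and an upper bound on $\|\mathcal{R}g_N\|_1$ via kernel cancellation between two far-apart pieces --- but your endgame fails. You ask for configurations with $\|g_N\|_{H^1_R}$ \emph{bounded} while $\|g_N\|_{H^1_{at}}\to\infty$, and then propose ``a compactness/normal-families argument'' to extract a limit $g$. There is no such compactness in these $L^1$-type spaces, and even if $g_N$ converged weakly, neither $g\in H^1_R(\mu)$ nor $g\notin H^1_{at}(\mu)$ would follow from the bounds on the $g_N$. What actually happens is that \emph{both} norms diverge, but at different rates: for $g_n=\delta_{x_n}-\delta_o$ (two unit point masses at level $0$ with $d(x_n,o)\approx\log n$) one has $\|g_n\|_{H^1_{at}}\gtrsim\log n$, by pairing with the explicit $BMO$ function $f$ of \eqref{fs}, while $\|\mathcal{R}g_n\|_1\lesssim\log\log n$ (Proposition \ref{r0gn}): the radiality of $H_t(x,\cdot)$ forces the two contributions to cancel exactly off the cone below $x_n\wedge o$, and the surviving region has depth $\approx\log n$, so summing $1/j$ over its levels gives $\log\log n$. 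The counterexample is then the explicit series $g=\sum_k c_kg_k$ with $c_k=k^{-1}(\log k)^{-3/2}$, chosen so that $\sum_k c_k\log\log k<\infty$ (hence $\mathcal{R}g\in L^1(\mu)$ by subadditivity) while $\sum_k c_k\log k=\infty$ (hence, by the $BMO$ pairing, $g\notin H^1_{at}(\mu)$). Without this quantitative gap between the two growth rates and the series construction, your argument does not close.
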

We point out that the  function $g$ in the above statement coincides with the function which appears in the statement of Theorem \ref{th1} $iii)$. \bigskip\\ 
This paper is organized as follows. In Section \ref{sec2}, we introduce the notation, preliminary notions and we provide useful estimates concerning the heat kernel and its gradient. Section \ref{sec3} is devoted to the proof of  Theorem \ref{th1} $i)$ and $ii)$, while in Section \ref{Heat2} we construct the function $g$ of Theorem \ref{th1} $iii).$ Finally, in Section \ref{Riesz}, we prove Theorem \ref{thm2}.
\smallskip \\ 
Along the paper, $C$ denotes a positive constant which may vary from line to line. However, when the exact values are unimportant for us, we use the standard notation $f_1(x)\lesssim f_2(x)$ to indicate
that there exists a positive constant $C$, independent of the variable $x$ but possibly depending on
some involved parameters, such that $f_1(x) \le Cf_2(x)$ for every $x$. When both $f_1(x)\lesssim f_2(x)$ and $f_2(x) \lesssim f_1(x)$ are valid, we will write $f_1(x) \approx f_2(x).$
\section{Setting}\label{sec2}
\subsection{Homogeneous trees and canonical flow}
Let  $T$  be an unoriented  tree, i.e., an unoriented connected graph  with no cycles. We denote by $V$ the set of vertices  and by $E$ the set of edges of $T$ 
and we write $x \sim y$ when $(x,y) \in E.$ If $x \sim y$ we say that $x$ is a neighbour of $y$.  Consider a sequence of vertices $\lbrace x_j\rbrace$ such that $x_j\sim x_{j+1}$. This naturally identifies an associated sequence of edges $\lbrace e_j\rbrace$, where $e_j$ is the edge connecting $x_j$ to $x_{j+1}$. We say that $\lbrace x_j\rbrace$ is a \textit{path} if $\lbrace e_j\rbrace$ does not contain repeated edges. If the path $\gamma=\lbrace x_j\rbrace_{j=0}^n$ is finite, $x_0$ and $x_n$ are called the endpoints of $\gamma$. 
The {\it discrete distance} $d(x,y)$ counts the minimum number of edges one has to cross while moving from $x$ to $y$ along a path. In a tree, for every couple of vertices $(x,y)$, there exists a unique  path   (which we call geodesic) realizing such a distance. In this case, we denote by $[x,y]$ the geodesic connecting  $x$ to $y$.  We denote by $\Gamma$ the family of geodesics and by $S_r(x)$ and $B_r(x)$  the metric sphere and  ball of center $x\in V$ and radius $r \ge 0$ respectively.  \\ Let $\mathbb{T}_{q+1}$ denote  the {\it homogeneous tree} of order $q+1$, namely, a tree such that  each vertex has exactly $q+1$ neighbours, where $q \in \mathbb{N}\setminus\{0\}.$ 
Hereinafter, we assume $T=\mathbb{T}_{q+1}=(V,E)$ with $q \ge 2.$ \\

We fix a distinguished point $o\in V$ which we call the \textit{origin} of the tree. When $x \in V$ we  denote the distance between $x$ and $o$ by $|x|$. We write $\Gamma_0$ for the family of half-infinite geodesics having an endpoint in the origin, $\Gamma_0=\lbrace \gamma=\lbrace x_j\rbrace_{j=0}^\infty\in \Gamma, x_0=o\rbrace$. The \textit{boundary} of the tree $\partial T$ is classically identified with the set of labels corresponding to elements of $\Gamma_0$,
\begin{equation*}
    \partial T=\lbrace \zeta_\gamma: \ \gamma\in \Gamma_0\rbrace.
\end{equation*}
It is standard to denote a half-infinite geodesic starting at the vertex $x$ and ending at $\xi \in \partial T$ by $[x,\xi)$. A point $\xi_0\in \partial T$ can be chosen to play the role of \textit{root} of the tree. The role of such a point is to induce a partial order relation on ${V}$. We say that $x\geq y$ if and only if $x\in [y,\xi_0)$. We define the projection of $x$ on the half-infinite geodesic $[o,\xi_0)$ as
\begin{equation*}
    \Pi_{\xi_0}(x)=\argmin_{y\in[o,\xi_0)} d(x,y),
\end{equation*}
and the \textit{level} of $x$ as
\begin{equation*}
    \ell(x)=d(o,\Pi_{\xi_0}(x))-d(\Pi_{\xi_0}(x),x).
\end{equation*}
 The fixed point $\xi_0 \in \partial T$ is called {\it mythical ancestor}. 
Note that $x\geq y$ if and only if $\ell(x)-\ell(y)=d(x,y)$. \\ 
 The \textit{predecessor} of $x$ is the unique vertex $p(x)$ such that $x\sim p(x)$ and $\ell(p(x))=\ell(x)+1$, while $y$ is a \textit{son} of $x$ if it belongs to the set $s(x)=\lbrace y\sim x: \ \ell(y)=\ell(x)-1\rbrace$. More generally, for any integer $m\ge 2$ we denote by $p^m$ the composition $p\circ p^{m-1}$, where $p^1=p.$
We define the \textit{confluent} of $x,y\in V$ as the point
\begin{equation*}
\begin{split}
    x\wedge y&=\argmax\lbrace \ell(z): \ z\in [x,y]\rbrace=\argmin\lbrace \ell(z): \ z\geq x, z\geq y\rbrace.
\end{split}
\end{equation*}

We denote by $C(V)$ the set of complex-valued functions on $V$. If $A\subset V$ we write $|A|$ to denote the cardinality of $A$. We endow $V$ with the measure $\mu$ defined as 
\begin{align*}
    \mu(A)=\sum_{x \in A}  q^{\ell(x)},
\end{align*}
where $A \subset V.$  We recall that $\mu$ is a flow measure  in the sense that 
\begin{align*}
    \mu(x)=q^{\ell(x)}=q q^{\ell(x)-1}= \sum_{y \in s(x)} \mu(y) \qquad{\forall x \in V,}
\end{align*} (see \cite{LSTV} for more information about flows). The measure $\mu$ was introduced by  W. Hebisch and T. Steger in \cite{hs} and it represents  the canonical flow measure on $T,$ since it equally distributes the mass of a vertex among its
sons. \\   For $p\in[1,\infty)$ we define $L^p(\mu)= \big\{f \in C(V) \ : \ \|f\|_p= \big(\sum_{x \in V} |f(x)|^p \ \mu(x)\big)^{1/p}<+\infty\big\}$. We also define $L^\infty(\mu)=\{f \in C(V) \ : \ \|f\|_{\infty}= \sup_{x \in V} |f(x)| <+\infty\}$ and denote by $\#$ the counting measure on $V$. Finally, if $f \in C(V)$ we define the gradient of $f$ as 
\begin{align*}
    \nabla f (x)= f(x)-f(p(x)) \qquad{\forall x \in V}.
\end{align*}
\subsection{Atomic Hardy space}\label{athasp}
 In \cite{LSTV} the authors develop a Calder\'on--Zygmund theory with respect to locally doubling flow measures and a family of sets $\mathcal{F}$ which are called admissible trapezoids. Hereinafter, we say that a set $R$ belongs to $\mathcal{F}$ if either $R=\{y\}$ for some $y \in V$ or there exist a vertex $y_R$ and two positive integers $h',h''$,  such that $R=\{y \le y_R \ : \ h' \le d(y,y_R) <h''\}=R_{h'}^{h''}(y_R)$ and  $2 \le \frac{h''}{h'} \le 12$. 

It is worth noticing that $\mu(R_{h'}^{h''}(y_R))=q^{\ell(y_R)}(h''-h').$
We introduce the atomic Hardy space $H^1_{at}(\mu)$ and its dual (we refer to \cite{LSTV} and \cite{arditti} for more details). A function $a$ is a  $(1,\infty)$-atom 
if the following hold:
  \begin{itemize}
      \item[$(i)$] $a$ is supported in a set $R\in \mathcal{F}$;  
      \item[$(ii)$] $\|a\|_\infty \le \frac{1}{\mu(R)}$;  
      \item[$(iii)$] $ \sum_{x \in R} a(x) \mu(x)=0$.
  \end{itemize}
The atomic Hardy space is defined as
\begin{align*}
    H^1_{at}(\mu)=\bigg\{f \in L^1(\mu) \ : f=\sum_{j}\lambda_j a_j, \ \{\lambda_j\}_j\in \ell^1(\mathbb{N}), \ a_j \ (1,\infty)-atom\ \bigg\},
\end{align*}
endowed with the  norm $\| f\|_{H^1_{at}}=\inf\{\sum_j |\lambda_j|: f=\sum_j\lambda_j a_j, \ a_j \ (1,\infty)-atom\}$.  \\ 
The space of functions of bounded mean oscillation is
\begin{align*}
    BMO(\mu)=\bigg\{f \in C(V) \ : \ \sup_{R \in \mathcal{F}} \frac{1}{\mu(R)}\sum_{x \in R}|f(x)-f_R| \mu(x)<+\infty\bigg\},
\end{align*}
where $f_R$ denotes the average of $f$ on $R$. \\ 
The dual of $H^1_{at}(\mu)$ can be identified with $BMO(\mu)$, see \cite[Th. 4.10]{LSTV}. In particular, if $f \in BMO(\mu)$ and $a$ is a $(1,\infty)$-atom, then
\begin{align}\label{(fiore)}
\bigg|\sum_{x \in V} f(x) a(x) \mu(x) \bigg| \lesssim \|f\|_{BMO}\|a\|_{H^1_{at}},
\end{align}

where $\|f\|_{BMO}=\sup_{R \in \mathcal{F}} \frac{1}{\mu(R)}\sum_{x \in R}|f(x)-f_R| \mu(x)$.
\subsection{Laplacians and Heat kernel}\label{lap}
Let $\Delta$ denote the combinatorial Laplacian, namely the operator defined on every $f \in C(V)$ by
\begin{align*}
    \Delta f (x)= \frac{1}{q+1} \sum_{y \sim x} (f(x)-f(y)) \qquad{\forall x \in V.}
\end{align*}
 The Laplacian $\Delta$ is bounded  on $L^p(\#)$ for any $p \in [1,\infty]$. Moreover, the $L^2(\#)$ spectrum of $\Delta$ is $[b, 2-b]$, where $b= \frac{(\sqrt{q}-1)^2}{q+1}$ (see \cite{CMS}). We refer to \cite{FTN} for more information about $\Delta$ and the spherical analysis on $T$. \\ 
 
Consider  the operator $A:C(V) \to C(V)$ defined on $f \in C(V)$ by \begin{align}\label{Stella}
    Af(x)= \frac{1}{2}\bigg(\frac{1}{q}\sum_{y \in s(x)}f(y)+f(p(x))\bigg) \qquad{\forall x \in V.}
\end{align}
Observe that we can associate to $A$ a probabilistic transition matrix, in the sense that
\begin{align}\label{proba}
  Af(x)= \sum_{y\in V} A(x,y)f(y) \ \ \text{and} \ \ \sum_{y \in V} A(x,y) =1,
\end{align} 
where $A(x,y)= \begin{cases} \frac{1}{2q} &\qquad{y \in s(x),} \\ 
\frac{1}{2} &\qquad{y=p(x),} \\
0 &\qquad{}\text{otherwise.}
\end{cases}$ \\ 

We define the operator
\begin{align}\label{deflap}
    \mathcal{L}=I-A,
\end{align}
which is the natural Laplacian in our setting. By \eqref{proba}, it is clear that $\mathcal{L}$ is a Laplacian from the probabilistic viewpoint (for more information about random walks and Laplacians on graphs  we refer to \cite{Woess}). It is also easy to see that $\mathcal{L}$ is  self-adjoint on $L^2(\mu).$  Such  operator was originally introduced  in \cite{hs}.  
It is worth noticing that
\begin{align}\label{conj}
      \mathcal{L}&=\frac{1}{1-b}\mu^{-1/2}\big(\Delta-bI\big)\mu^{1/2}.
\end{align} 
Using the fact that the pointwise multiplication by $\mu^{1/2}$ is a surjective isometry between $L^2(\#)$ and $L^2(\mu)$ and the pointwise multiplication by $\mu^{-1/2}$ is its inverse, the previous identity implies that $L^2(\mu)$-spectrum of $\mathcal{L}$ is [0,2]. \\ 
 Next, we shall define the heat kernel associated to $\mathcal{L}$ and provide some useful estimates. 
We denote by $\mathcal{H}_t$ the operator $e^{-t\mathcal{L}}$, $t >0$. Its integral kernel with respect to the measure $\mu$ is the function $H_t(\cdot,\cdot)$ such that
 for $f \in C(V)$
\begin{align*}
    \mathcal{H}_tf(x)=\sum_{y \in V} H_t(x,y)f(y)\mu(y) \qquad{\forall x \in V.}
\end{align*}

By \eqref{conj} we can explicitly write $H_t$ in terms of the heat kernel associated to $\Delta$ on $T$, which we shall denote by $h_t$.    
 By the Spectral Theorem
\begin{align}\label{awad}
   H_t(x,y)&=e^{\frac{b t}{1-b}} q^{(-\ell(y)-\ell(x))/2}h_{\frac{t}{1-b}}(x,y) \qquad{\forall t>0, x ,y \in V.} 
\end{align} Notice that, since $A$ is a transition matrix
\begin{align}\label{contraction}
    \sum_{y \in V} H_t(x,y) \mu(y) = 1, \qquad{\forall t \in \mathbb{R}^+, \ x \in V;}
\end{align} moreover, since
 $h_t(x,y)=h_t(y,x)$ we deduce that 
\begin{align*}
    H_t(x,y)=H_t(y,x) \qquad{\forall t>0, x,y \in V.}
\end{align*}

In the following, we denote by $h_t^{\mathbb{Z}}$ the heat kernel associated to the combinatorial Laplacian on $\mathbb{Z}$ and,  with a slight abuse of notation, we denote by $h^{\mathbb{Z}}_t(j)$ the function $h^{\mathbb{Z}}_t(j,0)$.\\ 
In the next proposition, we collect some results of M. Cowling, S. Meda, and A.G. Setti (see \cite[Lemma 2.4., Prop. 2.5]{CMS})  which provide an explicit expression and a sharp approximation of $h_t$ that will be useful in the sequel. 
\begin{prop}[\cite{CMS}]\label{comese} The following hold for all $t>0,$  $x \in V$ and $j \in \mathbb{N}:$
 \begin{align*}
     &i) \ h_t(x,y)= \frac{2e^{-bt}}{(1-b)t}q^{-d(x,y)/2} \sum_{k=0}^\infty q^{-k}(d(x,y)+2k+1) h^{\mathbb{Z}}_{t(1-b)}(d(x,y)+2k+1), \\ 
     &ii) \ h^{\mathbb{Z}}_t(j) \approx \frac{e^{-t+\sqrt{j^2+t^2}}}{(1+j^2+t^2)^{1/4}}\bigg(\frac{t}{j+\sqrt{j^2+t^2}}\bigg)^j, \\ 
     &iii) \ h^{\mathbb{Z}}_t(j)-h^{\mathbb{Z}}_t(j+2)=\frac{2(j+1)}{t}h^{\mathbb{Z}}_t(j+1).
 \end{align*}
\end{prop}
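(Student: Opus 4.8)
\textbf{Proof plan for Proposition~\ref{comese}.}
The plan is to derive part $i)$ directly from the explicit spherical-function formula for the heat kernel on $\mathbb{T}_{q+1}$, then obtain parts $ii)$ and $iii)$ as purely one-dimensional facts about the heat kernel on $\mathbb{Z}$, since these are exactly the statements recorded in \cite{CMS}. For $i)$ I would start from the known representation of $h_t$ via the Plancherel measure and spherical functions on the homogeneous tree, and use the standard identity expressing the spherical functions in terms of the kernel of the combinatorial Laplacian on $\mathbb{Z}$; after interchanging the (absolutely convergent) sum over $k$ with the spectral integral, the $\mathbb{Z}$-heat kernel $h^{\mathbb{Z}}_{t(1-b)}$ appears evaluated at the odd integers $d(x,y)+2k+1$, with the geometric weights $q^{-k}$ coming from the decomposition of spheres in the tree. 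The prefactor $\frac{2e^{-bt}}{(1-b)t}q^{-d(x,y)/2}$ is then just bookkeeping of the normalisation constant $b=\frac{(\sqrt q-1)^2}{q+1}$ and the time rescaling $t\mapsto t(1-b)$ that relates $\Delta$ on $T$ to $\Delta$ on $\mathbb{Z}$.

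For $ii)$, the estimate for $h^{\mathbb{Z}}_t(j)$ is obtained from the integral representation $h^{\mathbb{Z}}_t(j)=\frac{1}{2\pi}\int_{-\pi}^{\pi} e^{-t(1-\cos\theta)} e^{ij\theta}\,d\theta = e^{-t} I_j(t)$, where $I_j$ is the modified Bessel function; the claimed two-sided bound is then the classical uniform asymptotics for $I_j(t)$ in the regime where both $j$ and $t$ are allowed to be large, which one proves by the saddle-point method. The saddle point of the phase $t\cos\theta + ij\theta$ (after deforming the contour) sits at $\theta = i\,\mathrm{arcsinh}(j/t)$, and evaluating the phase there produces exactly $-t+\sqrt{j^2+t^2}$ together with the factor $\big(t/(j+\sqrt{j^2+t^2})\big)^j$, while the Gaussian fluctuation around the saddle gives the algebraic factor $(1+j^2+t^2)^{-1/4}$; the $1+$ inside is needed only to make the bound uniform down to small $j,t$. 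Part $iii)$ is elementary: differentiating $h^{\mathbb{Z}}_t(j)=e^{-t}I_j(t)$ in $t$ and using the heat equation $\partial_t h^{\mathbb{Z}}_t(j)=-\big(h^{\mathbb{Z}}_t(j)-\tfrac12 h^{\mathbb{Z}}_t(j-1)-\tfrac12 h^{\mathbb{Z}}_t(j+1)\big)$ together with the Bessel recurrence $I_{j-1}(t)-I_{j+1}(t)=\frac{2j}{t}I_j(t)$ yields, after reindexing $j\mapsto j+1$, the stated three-term identity $h^{\mathbb{Z}}_t(j)-h^{\mathbb{Z}}_t(j+2)=\frac{2(j+1)}{t}h^{\mathbb{Z}}_t(j+1)$.

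The main obstacle is the uniform saddle-point analysis in $ii)$: one must track the error term in the stationary-phase expansion uniformly over the whole range of $(j,t)\in\mathbb{N}\times(0,\infty)$, including the transitional regime $j\approx t$ where the saddle approaches the endpoint of integration and the naive Gaussian approximation degrades. Handling this cleanly requires either a careful steepest-descent argument with explicit control of the remainder, or an appeal to the known uniform asymptotic expansions of Bessel functions. Since, however, all three statements are quoted verbatim from \cite[Lemma 2.4, Prop. 2.5]{CMS}, I would in practice simply cite that reference and not reproduce the saddle-point computation, noting only the time-rescaling and the combinatorial sphere decomposition that translate their $T$-statement into the form used here.
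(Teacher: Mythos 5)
Your proposal is correct and, in its practical conclusion, matches the paper exactly: the paper offers no proof of this proposition at all, simply quoting it from \cite[Lemma~2.4, Prop.~2.5]{CMS}, which is precisely what you say you would do. Your sketch of how the statements are actually established (spherical/sphere-decomposition formula for $i)$, the identity $h^{\mathbb{Z}}_t(j)=e^{-t}I_j(t)$ with uniform Bessel asymptotics for $ii)$, and the recurrence $I_{j-1}-I_{j+1}=\frac{2j}{t}I_j$ for $iii)$, where the detour through the heat equation is not even needed) is sound and consistent with the cited source.
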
Using $i)$ and \eqref{awad}, we easily get 
\begin{align*}
    H_t(x,y)=q^{-\ell(x)/2-\ell(y)/2}e^{bt/(1-b)}h_{t/(1-b)}(x,y)=Q(x,y)J_t(x,y),
\end{align*}
where 
\begin{align}\label{Q}
    Q(x,y)=q^{[-\ell(x)/2-\ell(y)-d(x,y)]/2}
\end{align}
and 
\begin{align}\label{J}
    J_t(x,y)=\frac{2}{t}\sum_{k=0}^\infty q^{-k}(d(x,y)+2k+1)h^{\mathbb{Z}}_t(d(x,y)+1).
\end{align} Then, by means of $i)$, we obtain the following estimate for $H_t$
\begin{align}\label{asympestimate}
    H_t(x,y) \approx \frac{Q(x,y)}{t}(d(x,y)+1)h^{\mathbb{Z}}_t(d(x,y)+1).
\end{align}

We now introduce some notation. For every $n \in \mathbb{N}$ we define the function $s_n : \mathbb{R}^+ \to \mathbb{R}$ by
\begin{align}\label{sn}
     s_n(t)= (n+1) \frac{e^{-t}e^{\sqrt{(n+1)^2+t^2}} \bigg(\frac{t}{n+1+\sqrt{(n+1)^2+t^2}}\bigg)^{n+1}}{t(1+(n+1)^2+t^2)^{1/4}} \qquad{\forall t>0.}
\end{align} Observe that by \eqref{asympestimate} and Proposition \ref{comese} $ii)$ 
\begin{align}\label{equivs}
    H_t(x,y) \approx {Q(x,y)}s_{d(x,y)}(t).
\end{align}
Let $\varphi: \mathbb{R}^+ \to \mathbb{R}$ be the function defined by 
\begin{align}\label{fi}
    \varphi(t)=-t+\sqrt{1+t^2}+ \log t-\log(1+\sqrt{1+t^2}) \qquad{\forall t>0.}
\end{align} We have that \begin{align*}
    s_n(t(n+1))=\frac{e^{(n+1)\varphi(t)}}{t(1+(n+1)^2+t^2(n+1)^2)^{1/4}}.
\end{align*}It is easy to verify that $\varphi$ is negative, increasing and 
\begin{align}\label{estfi}
   \varphi(t) \le  \frac{1}{2t}-\log\bigg(1+\frac{1}{t}\bigg)  \qquad{\forall t> 0}.
\end{align} 
We now state a  technical lemma involving the function $s_n$ defined in \eqref{sn}.
\begin{lem}\label{fun}The following hold \begin{itemize}
    \item[i)]

   $\sup_{t>0} s_n(t) \lesssim \frac{1}{(n+1)^2}.$

\item[ii)] $\sup_{t>0} \frac{n}{t}s_n(t) \lesssim \frac{1}{(n+1)^3}.$
\end{itemize}
\end{lem}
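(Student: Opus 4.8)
\textbf{Proof plan for Lemma \ref{fun}.}

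The plan is to reduce everything to the function $\varphi$ defined in \eqref{fi} via the scaling identity
\[
s_n(t(n+1))=\frac{e^{(n+1)\varphi(t)}}{t\,(1+(n+1)^2+t^2(n+1)^2)^{1/4}},
\]
so that $\sup_{t>0}s_n(t)=\sup_{t>0}s_n(t(n+1))$ and similarly for part ii). First I would bound the denominator from below: for $t>0$ one has $1+(n+1)^2+t^2(n+1)^2\ge (n+1)^2\max\{1,t^2\}$, hence $t\,(1+(n+1)^2+t^2(n+1)^2)^{1/4}\gtrsim (n+1)^{1/2}t\max\{1,t\}^{1/2}\gtrsim (n+1)^{1/2}\max\{t,t^{1/2}\}$. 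Thus
\[
s_n(t(n+1))\lesssim \frac{e^{(n+1)\varphi(t)}}{(n+1)^{1/2}\max\{t^{1/2},t\}}.
\]
Now I would split into the regimes $t\le 1$ and $t\ge 1$ and use the elementary estimate \eqref{estfi}, namely $\varphi(t)\le \frac{1}{2t}-\log(1+\frac1t)$. For $t\le 1$ this gives $e^{(n+1)\varphi(t)}\le e^{(n+1)/(2t)}(1+\tfrac1t)^{-(n+1)}\le e^{(n+1)/(2t)}\,t^{\,n+1}$ (using $1+\tfrac1t\ge \tfrac1t$), and combining with the $t^{-1/2}$ denominator factor one is left to bound $\sup_{0<t\le1} t^{\,n+1/2}e^{(n+1)/(2t)}$ — wait, that blows up, so this crude split is too lossy. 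The correct move is to keep the full bound $\varphi(t)\le -t+\sqrt{1+t^2}+\log\frac{t}{1+\sqrt{1+t^2}}$: for $t\le 1$ write $\varphi(t)\le \log\frac{t}{1+\sqrt{1+t^2}}\le \log\frac{t}{2}$ since $-t+\sqrt{1+t^2}\le 0$ is false — actually $\sqrt{1+t^2}\ge t$ so $-t+\sqrt{1+t^2}\ge 0$; instead use $-t+\sqrt{1+t^2}=\frac{1}{t+\sqrt{1+t^2}}\le 1$ for $t\ge 0$, giving $\varphi(t)\le 1+\log\frac{t}{2}$, hence $e^{(n+1)\varphi(t)}\le e^{n+1}(t/2)^{n+1}$. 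Then for $0<t\le 1$,
\[
s_n(t(n+1))\lesssim \frac{e^{n+1}(t/2)^{n+1}}{(n+1)^{1/2}t^{1/2}}=\frac{(e/2)^{n+1}}{(n+1)^{1/2}}\,t^{\,n+1/2},
\]
which is increasing in $t$, so its supremum over $(0,1]$ is attained at $t=1$ and equals $(e/2)^{n+1}(n+1)^{-1/2}$ — this decays exponentially, far better than $(n+1)^{-2}$. For $t\ge 1$ I would instead use that $\varphi$ is increasing and negative with $\varphi(t)\to 0$, together with the quantitative bound: from \eqref{estfi}, $\varphi(t)\le \tfrac{1}{2t}-\log(1+\tfrac1t)\le \tfrac1{2t}-\tfrac1t+\tfrac1{2t^2}=-\tfrac1{2t}+\tfrac1{2t^2}\le -\tfrac1{4t}$ for $t\ge 2$ (and $\varphi(t)\le\varphi(2)<0$ a fixed negative constant for $1\le t\le 2$), so $e^{(n+1)\varphi(t)}\le e^{-(n+1)/(4t)}$ for $t\ge 2$. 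Then with the denominator factor $\gtrsim (n+1)^{1/2}t$,
\[
s_n(t(n+1))\lesssim \frac{e^{-(n+1)/(4t)}}{(n+1)^{1/2}\,t}\qquad(t\ge 2).
\]
Substituting $u=(n+1)/t$ this is $\lesssim (n+1)^{-3/2}\,u\,e^{-u/4}\lesssim (n+1)^{-3/2}\lesssim (n+1)^{-2}$ only when $(n+1)^{-3/2}\le(n+1)^{-2}$, which is false; but $\sup_u u e^{-u/4}$ is a constant, so in fact $s_n(t(n+1))\lesssim (n+1)^{-3/2}$, which still is not $(n+1)^{-2}$. The fix is to also use the extra decay from the numerator more carefully — specifically near the maximizing scale $t\approx c(n+1)$, i.e. the original variable $t\approx c(n+1)^2$, the denominator is of order $(n+1)^{1/2}\cdot (n+1)=(n+1)^{3/2}$ and $\varphi$ contributes a genuine constant negative rate; combining, $s_n$ evaluated there is $O((n+1)^{-3/2})$, and one squeezes the last half-power out of the region $t\gtrsim (n+1)$ where $e^{-(n+1)/(4t)}$ is bounded away from $1$ only if $t\lesssim (n+1)$, forcing $t\approx (n+1)$ and then the denominator factor $t$ itself gives the needed $(n+1)^{-1/2}$ gain on top: $\sup_{t\ge 1}\frac{e^{-(n+1)/(4t)}}{(n+1)^{1/2}t}$; the supremand, as a function of $t$, is maximized at $t=(n+1)/4$ with value $\frac{e^{-1}}{(n+1)^{1/2}(n+1)/4}=\frac{4e^{-1}}{(n+1)^{3/2}}$. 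So honestly the bound I get is $(n+1)^{-3/2}$, not $(n+1)^{-2}$; the resolution must be that the denominator in \eqref{sn} contributes a full power of $n+1$ rather than a half-power when $t\asymp n+1$, because there $1+(n+1)^2+t^2(n+1)^2\asymp (n+1)^4$, giving $(\cdot)^{1/4}\asymp (n+1)$ and with the separate factor $t\asymp n+1$ one obtains denominator $\asymp (n+1)^2$. That is the key point: at the relevant scale the denominator is quadratic in $n+1$, which is exactly what yields $\sup_t s_n(t)\lesssim (n+1)^{-2}$.

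Concretely, I would therefore not split at $t=1$ but rather bound, for \emph{all} $t>0$,
\[
t\,(1+(n+1)^2+t^2(n+1)^2)^{1/4}\ \gtrsim\ t\,(n+1)^{1/2}(1+t^2)^{1/4}\ \gtrsim\ (n+1)^{1/2}\,t(1+t)^{1/2},
\]
and then handle $e^{(n+1)\varphi(t)}$: on $0<t\le1$ via $\varphi(t)\le 1+\log(t/2)$ as above (exponential decay, done); on $t\ge 1$ via $\varphi(t)\le -c_0/t$ for a universal $c_0>0$ coming from \eqref{estfi} (check: $\tfrac1{2t}-\log(1+\tfrac1t)\le -c_0/t$ for $t\ge 1$ is elementary since $\log(1+x)\ge x-x^2/2$ gives $\tfrac1{2t}-\log(1+\tfrac1t)\le \tfrac1{2t}-\tfrac1t+\tfrac1{2t^2}=-\tfrac1{2t}+\tfrac1{2t^2}\le -\tfrac1{4t}$ for $t\ge1$, so $c_0=1/4$). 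Then for $t\ge1$,
\[
s_n(t(n+1))\ \lesssim\ \frac{e^{-(n+1)/(4t)}}{(n+1)^{1/2}\,t\cdot t^{1/2}}\ =\ \frac{1}{(n+1)^{1/2}}\cdot\frac{e^{-(n+1)/(4t)}}{t^{3/2}}.
\]
Writing $v=(n+1)/t\in(0,n+1]$, $\dfrac{e^{-(n+1)/(4t)}}{t^{3/2}}=\dfrac{v^{3/2}e^{-v/4}}{(n+1)^{3/2}}\lesssim (n+1)^{-3/2}\sup_{v>0}v^{3/2}e^{-v/4}\lesssim (n+1)^{-3/2}$, whence $s_n(t(n+1))\lesssim (n+1)^{-2}$ for $t\ge1$, which together with the $t\le1$ case proves part i). For part ii), note $\frac nt s_n(t)$: after the substitution $t\mapsto t(n+1)$ the extra factor becomes $\frac{n}{t(n+1)}\le \frac1t$, so
\[
\frac{n}{t(n+1)}\,s_n(t(n+1))\ \lesssim\ \frac1t\cdot s_n(t(n+1)),
\]
and I would rerun the same two-regime estimate with one more power of $1/t$ in hand: on $0<t\le 1$ the bound $\frac1t\cdot\frac{(e/2)^{n+1}}{(n+1)^{1/2}}t^{n+1/2}=\frac{(e/2)^{n+1}}{(n+1)^{1/2}}t^{n-1/2}$ is still increasing in $t$ for $n\ge1$, supremum at $t=1$, exponentially small; on $t\ge1$, $\frac1t\cdot s_n(t(n+1))\lesssim \frac{1}{(n+1)^{1/2}}\cdot\frac{e^{-(n+1)/(4t)}}{t^{5/2}}=\frac{1}{(n+1)^{1/2}}\cdot\frac{v^{5/2}e^{-v/4}}{(n+1)^{5/2}}\lesssim (n+1)^{-3}$. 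Combining the two regimes gives part ii).

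The only genuine obstacle is the bookkeeping of the power of $n+1$ produced by the quartic denominator $(1+(n+1)^2+t^2(n+1)^2)^{1/4}$ in combination with the separate factor $t$ in \eqref{sn}: one must be careful not to throw away the $(1+t^2)^{1/4}$ piece, since it is precisely what upgrades the naive $(n+1)^{-3/2}$ to the sharp $(n+1)^{-2}$ (and $(n+1)^{-5/2}$ to $(n+1)^{-3}$ in part ii)) once $t$ is written in terms of $v=(n+1)/t$ and the universal bound $\sup_{v>0}v^{\alpha}e^{-v/4}<\infty$ is invoked. Everything else — the estimate $\varphi(t)\le 1+\log(t/2)$ for small $t$ and $\varphi(t)\le -1/(4t)$ for $t\ge1$ — is elementary calculus from \eqref{fi} and \eqref{estfi}.
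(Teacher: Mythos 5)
Your treatment of the regime $t\ge 1$ (in the rescaled variable $t\mapsto t(n+1)$) is correct and in fact slightly cleaner than the paper's: by keeping the factor $(1+t^2)^{1/4}$ in the denominator and substituting $v=(n+1)/t$ together with $\varphi(t)\le -c_0/t$ and $\sup_{v>0}v^{\alpha}e^{-c_0 v}<\infty$, you merge into one computation what the paper does in two separate cases ($t\ge(n+1)^2$ and $n+1\le t<(n+1)^2$ in the original variable, where it uses $e^{(n+1)\varphi(\alpha)}\lesssim(\alpha/(n+1))^3$). Just note that your chain $\tfrac1{2t}-\log(1+\tfrac1t)\le-\tfrac1{4t}$ only holds for $t\ge 2$; the patch via $\varphi(t)\le\varphi(2)<0$ on $[1,2]$, which you mention once and then drop, is genuinely needed.

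The regime $0<t\le 1$, however, contains a real error. You bound $\varphi(t)\le 1+\log(t/2)$, hence $e^{(n+1)\varphi(t)}\le e^{n+1}(t/2)^{n+1}$, and conclude that $\sup_{0<t\le1}(e/2)^{n+1}(n+1)^{-1/2}t^{n+1/2}=(e/2)^{n+1}(n+1)^{-1/2}$ ``decays exponentially.'' It does not: $e/2\approx 1.36>1$, so $(e/2)^{n+1}$ grows exponentially, and the estimate is useless near $t=1$ (indeed $1+\log(1/2)>0$ while $\varphi(1)<0$, so your upper bound for $\varphi$ has lost the sign exactly where it matters). The crude inequality $e^{\varphi(t)}\le \tfrac{e}{2}\,t$ should be spent on \emph{one} factor only, to absorb the $1/t$ singularity of the denominator at $t\to 0^+$; the remaining $n$ factors must be controlled by the monotonicity and strict negativity of $\varphi$, namely $e^{n\varphi(t)}\le e^{n\varphi(1)}$ with $\varphi(1)=\sqrt2-1-\log(1+\sqrt2)<0$, which is what actually produces exponential decay in $n$ and hence the bound $\lesssim(n+1)^{-2}$ (and, writing $e^{(n+1)\varphi(t)}\le e^{(n-1)\varphi(1)}\,e^{2\varphi(t)}$ with $e^{2\varphi(t)}/t^2\lesssim1$, the bound $\lesssim(n+1)^{-3}$ for part ii)). This is precisely the paper's Case 3. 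With that single repair your argument goes through.
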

\begin{proof} 
We distinguish three different cases, namely, we estimate the supremum of the above functions when $t \ge (n+1)^2$, $n+1\le t<(n+1)^2$ and $0<t<n+1$.\\ 
{\it Case 1.}\ 
Observe that 
\begin{align*}
   \sup_{t\ge (n+1)^2}s_n(t)= \sup_{t>n+1}s_n(t(n+1))=\sup_{t>n+1}\frac{e^{(n+1)\varphi(t)}}{t[1+(n+1)^2(1+t^2)]^{1/4}}.
\end{align*} Since $\varphi$ is negative on $\mathbb{R}^+$ it follows
\begin{align*}
    &\sup_{t\ge (n+1)^2}s_n(t) \le \frac{1}{(n+1)^2} \  \ \ \text{and} \ \ \ \sup_{t \ge (n+1)^2} \frac{n}{t}s_n(t) \le \frac{1}{(n+1)^3}.
\end{align*}
{\it Case 2.} When $t \in [n+1,(n+1)^2)$ we can write $t=(n+1)\alpha $ with $\alpha \in [1,n+1)$ and 
\begin{align*}
    \sup_{n+1\le t<(n+1)^2}s_n(t)=\sup_{1\le \alpha<n+1}\frac{e^{(n+1)\varphi(\alpha)}}{\alpha[1+(n+1)^2(1+\alpha^2)]^{1/4}}.
\end{align*}
By using  \eqref{estfi} and the fact that $(1+1/\alpha)^{\alpha} \ge 2$ for all $\alpha \ge 1$, we get 
\begin{align*}
    \frac{e^{(n+1)\varphi(\alpha)}}{\alpha[1+(n+1)^2(1+\alpha^2)]^{1/4}} \le \frac{\bigg(\frac{e^{1/2}}{(1+1/\alpha)^\alpha}\bigg)^{(n+1)/\alpha}}{\alpha^{3/2}(n+1)^{1/2}}\le \frac{\bigg(\frac{e^{1/2}}{2}\bigg)^{(n+1)/\alpha}}{\alpha^{3/2}(n+1)^{1/2}}.
\end{align*}
Next, we use that $\bigg(\frac{e^{1/2}}{2}\bigg)^{(n+1)/\alpha} \lesssim \frac{\alpha^3}{(n+1)^3}$  to obtain
\begin{align*}
    \sup_{1\le \alpha<n+1}\frac{e^{(n+1)\varphi(\alpha)}}{\alpha[1+(n+1)^2(1+\alpha^2)]^{1/4}} \lesssim \sup_{1\le \alpha<n+1}\frac{\alpha^{3/2}}{(n+1)^{7/2}} \le \frac{1}{(n+1)^2}
\end{align*} and 
\begin{align*}
    \sup_{1\le \alpha<n+1} \frac{n}{(n+1)\alpha}\frac{e^{(n+1)\varphi(\alpha)}}{\alpha[1+(n+1)^2(1+\alpha^2)]^{1/4}} \lesssim \sup_{1\le \alpha<n+1}\frac{\alpha^{1/2}}{(n+1)^{7/2}} \le \frac{1}{(n+1)^3}.
\end{align*}
{\it Case 3.} In this last case $t \in (0,n+1)$ thus we can write $t=(n+1)\alpha$ with $\alpha \in (0,1)$. By using the fact that $\varphi$ is increasing and negative, we get
\begin{align*}
    s_n(\alpha(n+1))&=\frac{e^{(n+1)\varphi(\alpha)}}{\alpha[1+(n+1)^2(1+\alpha^2)]^{1/4}}\le e^{n\varphi(\alpha)}\frac{e^{\varphi(\alpha)}}{\alpha} \\ 
    &\lesssim e^{n\varphi(1)}\lesssim \frac{1}{(n+1)^2},
\end{align*}
where we have used that $\frac{e^{\varphi(\alpha)}}{\alpha} \lesssim 1$ when $\alpha \in (0,1).$ If $n=0$, then $ii)$ follows trivially. Assume $n \ge 1$ and by repeating the same argument
\begin{align*}
     \frac{n}{(n+1)\alpha}s_n(\alpha(n+1))&\lesssim e^{(n-1)\varphi(\alpha)}\frac{e^{2\varphi(\alpha)}}{\alpha^2} \lesssim \frac{1}{(n+1)^3}.
\end{align*} This concludes the proof.
\end{proof} Combining the above lemma with \eqref{equivs}, we obtain that
\begin{align}\label{ineqsquare}
    &\sup_{t>0} H_t(x,y) \lesssim \frac{Q(x,y)}{(d(x,y)+1)^2} \qquad{\forall x,y \in V,} 
    \end{align} and 
    \begin{align}\label{inescub}
    &\sup_{t>0} \frac{d(x,y)}{t}H_t(x,y) \lesssim \frac{Q(x,y)}{(d(x,y)+1)^3} \qquad{\forall x,y \in V.}
\end{align}
In the next results
we recall some pointwise and integral estimates concerning the gradient of the heat kernel which were proved in \cite{LSTV2}.
\begin{lem}\label{ktl} Assume 
$x \not\le y$ where $x,y \in V$. Then,
\begin{align*}
      &i) \ \ \ \ \ |H_t(x,y)-H_t(x,p(y))| \lesssim \max\bigg\{\frac{d(x,y)H_t(x,p(y))}{t},  \frac{H_t(x,y)}{d(x,y)+1}\bigg\}, \\ 
    &ii) \ \ \ \ \ \sup_{t>0}|H_t(x,y)-H_t(x,p(y))| \lesssim  \frac{Q(x,y)}{(d(x,y)+1)^3}.
\end{align*}
\end{lem}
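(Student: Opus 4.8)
The goal is to prove Lemma~\ref{ktl}, namely the pointwise and uniform-in-$t$ bounds for the ``gradient'' $H_t(x,y)-H_t(x,p(y))$ of the heat kernel in the $y$-variable, under the hypothesis $x\not\le y$.

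\textbf{Setup and reduction.} The plan is to work with the factorization $H_t(x,y)=Q(x,y)J_t(x,y)$ from \eqref{J}, or equivalently with the sharp equivalence $H_t(x,y)\approx Q(x,y)s_{d(x,y)}(t)$ from \eqref{equivs}. The first observation I would record is how $Q$ and the distance behave along the edge from $y$ to $p(y)$ when $x\not\le y$: in that case the geodesic from $x$ to $y$ passes through $p(y)$, so $d(x,y)=d(x,p(y))+1$, and from \eqref{Q} one gets a clean multiplicative relation between $Q(x,y)$ and $Q(x,p(y))$ (a fixed power of $q$ depending only on the fact that $\ell(y)=\ell(p(y))-1$ and the distance increased by one). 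Writing $n=d(x,p(y))$, so $d(x,y)=n+1$, the difference $H_t(x,y)-H_t(x,p(y))$ reduces, up to the common factor $Q(x,p(y))$ and bounded constants, to an expression built from $s_{n+1}(t)$ and $s_n(t)$, i.e.\ from $h^{\mathbb Z}_t$ evaluated at consecutive arguments. This is where Proposition~\ref{comese}~$iii)$ enters: the discrete difference identity $h^{\mathbb Z}_t(j)-h^{\mathbb Z}_t(j+2)=\tfrac{2(j+1)}{t}h^{\mathbb Z}_t(j+1)$ is precisely the tool that converts a difference of heat kernels on $\mathbb Z$ into a single heat kernel multiplied by $\tfrac{j+1}{t}$.

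\textbf{Proof of $i)$.} After the reduction, the difference of the two $\mathbb Z$-heat-kernel terms telescopes via Proposition~\ref{comese}~$iii)$ into something of size $\tfrac{d(x,y)}{t}h^{\mathbb Z}_t(d(x,y)+1)$ (times $Q$), which by \eqref{asympestimate} is comparable to $\tfrac{d(x,y)}{t}H_t(x,p(y))$; this produces the first term in the maximum. The remaining contribution comes from the mismatch of the geometric-series prefactors (the $q^{-k}$ sums in \eqref{J}) and the slowly-varying algebraic weights $(1+j^2+t^2)^{-1/4}$ between argument $n$ and $n+1$, each of which costs a factor comparable to $\tfrac{1}{d(x,y)+1}$ relative to $H_t(x,y)$ itself; this gives the second term $\tfrac{H_t(x,y)}{d(x,y)+1}$. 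I would organize this by splitting according to whether $t\lesssim d(x,y)$ or $t\gtrsim d(x,y)$: in the first regime the $\tfrac{d(x,y)}{t}$ term dominates and the telescoping estimate is the operative one, while in the second regime the factor $\tfrac{d(x,y)}{t}$ is itself $\lesssim 1$ and one instead extracts the gain $\tfrac{1}{d(x,y)+1}$ from the comparison of $s_{n+1}$ with $s_n$ (using monotonicity of $\varphi$ and the explicit form \eqref{sn}).

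\textbf{Proof of $ii)$ and the main obstacle.} Part $ii)$ follows by taking the supremum over $t>0$ in part $i)$ and invoking Lemma~\ref{fun}: the first term of the maximum is controlled by $\sup_{t>0}\tfrac{n}{t}s_n(t)\lesssim (n+1)^{-3}$, i.e.\ $ii)$ of Lemma~\ref{fun} applied with the correct index (noting $d(x,y)=n+1$ and $Q(x,y)\approx Q(x,p(y))$), and the second term by $\sup_{t>0}s_{d(x,y)}(t)\lesssim (d(x,y)+1)^{-2}$ together with the extra $\tfrac{1}{d(x,y)+1}$, both giving $\tfrac{Q(x,y)}{(d(x,y)+1)^3}$. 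The main technical obstacle I anticipate is bookkeeping the prefactor relations cleanly: one must track carefully how $Q(x,y)$, $Q(x,p(y))$, the infinite $q^{-k}$ sums in \eqref{J} at shifted arguments, and the quartic-root weights all transform under $y\mapsto p(y)$, and verify that all the ``error'' pieces genuinely carry the claimed extra power of $(d(x,y)+1)^{-1}$ uniformly in $t$ — in particular that no term of size merely $H_t(x,y)$ (without the gain) survives. Since this lemma is quoted from \cite{LSTV2}, I expect the write-up here to be a recollection rather than a full derivation, so the cleanest route is to cite \cite{LSTV2} for the bulk and only indicate the mechanism (factorization $+$ Proposition~\ref{comese}~$iii)$ $+$ Lemma~\ref{fun}) sketched above.
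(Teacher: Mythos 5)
Your proposal follows the paper's route exactly: part $i)$ is simply quoted from \cite[Lemma 3.2]{LSTV2}, and part $ii)$ is obtained by taking the supremum in $i)$ and invoking $\sup_{t>0}H_t(x,y)\lesssim Q(x,y)(d(x,y)+1)^{-2}$ and $\sup_{t>0}\tfrac{d(x,y)}{t}H_t(x,y)\lesssim Q(x,y)(d(x,y)+1)^{-3}$, which are precisely \eqref{ineqsquare} and \eqref{inescub}, themselves consequences of Lemma \ref{fun} and \eqref{equivs} as you indicate. The only point to watch (present in the paper's one-line argument as well) is the case $d(x,y)=1$, i.e.\ $n=d(x,p(y))=0$, where $\sup_{t>0}t^{-1}s_0(t)=\infty$ so the first term of the maximum cannot be bounded termwise in $t$; there $ii)$ follows trivially by applying \eqref{ineqsquare} to $H_t(x,y)$ and $H_t(x,x)$ separately, since $Q(x,x)\approx Q(x,y)$.
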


\begin{proof}
 $i)$ is proved in \cite[Lemma 3.2]{LSTV2}. Combining $i)$ with \eqref{inescub}, we obtain $ii)$.
\end{proof} 
  \begin{lem}\label{lemma2}
 The following estimates hold 
  \begin{align*}
      &i) \int_1^\infty t^{-1/2} |H_t(x,y)-H_t(p(x),y)| \ dt \lesssim \frac{Q(x,y)}{(d(x,y)+1)^2} \qquad{\forall y \not \le x}, \\ 
      &ii) \ \int_{1}^\infty t^{-1/2}\frac{H_t(x,y)}{(d(x,y)+1)} \ dt \lesssim \frac{Q(x,y)}{(d(x,y)+1)^2} \qquad{\forall x,y \in V}.
  \end{align*}
  \end{lem}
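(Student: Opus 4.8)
The plan is to reduce both parts of the lemma to two scalar estimates for the profile $s_n$ of \eqref{sn}: for every $n\in\mathbb{N}$,
\begin{align}
\int_1^\infty t^{-1/2}s_n(t)\,dt &\lesssim \frac{1}{n+1}, \label{planA}\\
\int_1^\infty t^{-3/2}s_n(t)\,dt &\lesssim \frac{1}{(n+1)^3}. \label{planB}
\end{align}
Granting these, part $ii)$ is immediate from \eqref{equivs} and \eqref{planA}: for all $x,y\in V$,
\begin{align*}
\int_1^\infty t^{-1/2}\frac{H_t(x,y)}{d(x,y)+1}\,dt \approx \frac{Q(x,y)}{d(x,y)+1}\int_1^\infty t^{-1/2}s_{d(x,y)}(t)\,dt \lesssim \frac{Q(x,y)}{(d(x,y)+1)^2}.
\end{align*}

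For part $i)$ I would first record that, since $y\not\le x$, the confluent $x\wedge y$ differs from $x$, so $p(x)$ lies on the geodesic $[x,y]$ and $d(p(x),y)=d(x,y)-1$; in particular $d(x,y)\ge1$ and $Q(p(x),y)\approx Q(x,y)$ by \eqref{Q}. Next, using the symmetry $H_t(x,y)=H_t(y,x)$ and Lemma \ref{ktl} $i)$ with the roles of $x$ and $y$ interchanged (legitimate because $y\not\le x$), together with $\max\{a,b\}\le a+b$, one gets
\begin{align*}
|H_t(x,y)-H_t(p(x),y)| \lesssim \frac{d(x,y)}{t}\,H_t(p(x),y) + \frac{H_t(x,y)}{d(x,y)+1}.
\end{align*}
Multiplying by $t^{-1/2}$ and integrating over $[1,\infty)$, the contribution of the second summand is exactly the quantity bounded in part $ii)$. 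For the first summand I would write $d(x,y)=d(p(x),y)+1$ and invoke \eqref{equivs} and \eqref{planB}:
\begin{align*}
\int_1^\infty t^{-3/2}(d(p(x),y)+1)H_t(p(x),y)\,dt &\approx Q(p(x),y)(d(p(x),y)+1)\int_1^\infty t^{-3/2}s_{d(p(x),y)}(t)\,dt \\
&\lesssim \frac{Q(p(x),y)}{(d(p(x),y)+1)^2} \approx \frac{Q(x,y)}{(d(x,y)+1)^2},
\end{align*}
which closes part $i)$.

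The genuine work is \eqref{planA}--\eqref{planB}, and I would prove it by the same three-regime scheme used in the proof of Lemma \ref{fun}, via the substitution $t=(n+1)\alpha$, for which $s_n((n+1)\alpha)=e^{(n+1)\varphi(\alpha)}\big/\big(\alpha[1+(n+1)^2(1+\alpha^2)]^{1/4}\big)$. On the tail $t\ge(n+1)^2$ (i.e.\ $\alpha\ge n+1$) I would use only that $\varphi<0$ and $[1+(n+1)^2(1+\alpha^2)]^{1/4}\ge(n+1)^{1/2}\alpha^{1/2}$, which reduces the two tail integrals to $\int_{n+1}^\infty\alpha^{-2}\,d\alpha$ and $(n+1)^{-1}\int_{n+1}^\infty\alpha^{-3}\,d\alpha$, of the required sizes. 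On $1\le t<n+1$ I would use that $\varphi$ is increasing and negative, whence $s_n(t)\lesssim e^{n\varphi(1)}$ decays exponentially in $n$ and beats any power. The delicate range is $n+1\le t<(n+1)^2$: here the crude bound $s_n(t)\lesssim(n+1)^{-2}$ suffices for \eqref{planA} but \emph{not} for \eqref{planB}, so one must instead reuse the sharper pointwise estimate $s_n(t)\lesssim t^{3/2}(n+1)^{-5}$ extracted from Case 2 of the proof of Lemma \ref{fun} (which rests on \eqref{estfi} and $(e^{1/2}/2)^{(n+1)/\alpha}\lesssim\alpha^3(n+1)^{-3}$); inserting it gives $(n+1)^{-5}\int_{n+1}^{(n+1)^2}t\,dt\lesssim(n+1)^{-1}$ and $(n+1)^{-5}\int_{n+1}^{(n+1)^2}dt\lesssim(n+1)^{-3}$, respectively. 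Summing the three regimes yields \eqref{planA}--\eqref{planB}, and hence the lemma. The main obstacle is precisely this intermediate range, where one cannot afford to discard the $t$-dependence of $s_n$.
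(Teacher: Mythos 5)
Your proposal is correct. Note that the paper does not prove Lemma \ref{lemma2} itself: it simply cites \cite[Lemmas 3.4, 3.5]{LSTV2}, so there is no in-text argument to compare against. What you supply is a self-contained derivation built entirely from the paper's own toolkit: the reduction of both estimates to the two scalar bounds $\int_1^\infty t^{-1/2}s_n\lesssim (n+1)^{-1}$ and $\int_1^\infty t^{-3/2}s_n\lesssim (n+1)^{-3}$ via \eqref{equivs}, the symmetric application of Lemma \ref{ktl} $i)$ (valid since $y\not\le x$ gives $p(x)\in[x,y]$, $d(p(x),y)=d(x,y)-1$ and $Q(p(x),y)=Q(x,y)$), and the same three-regime analysis of $s_n$ through $\varphi$ and the substitution $t=(n+1)\alpha$ that the paper uses to prove Lemma \ref{fun}. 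You correctly identify the only genuinely delicate point, namely that on $n+1\le t<(n+1)^2$ the uniform bound $s_n\lesssim(n+1)^{-2}$ is insufficient for the $t^{-3/2}$ integral, and the sharper bound $s_n(t)\lesssim t^{3/2}(n+1)^{-5}$ extracted from Case 2 of Lemma \ref{fun} does close that gap; all the resulting integrals have the claimed sizes. This is an acceptable (and arguably preferable, because self-contained) substitute for the external citation.
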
 
  \begin{proof}
    We refer to \cite[Lemmas 3.4, 3.5]{LSTV2} for a detailed proof.
  \end{proof}
We conclude this section with a technical lemma that provides an algorithm that we will apply to integrate a certain class of functions.
\begin{lem}\label{integ}
Let $f_{x,n}$ be the function in $C(V)$ defined by
\begin{align*}
    f_{x,n}(y)=\frac{q^{-(\ell(x)+d(x,y))/2}}{(d(x,y)+n)^2} \qquad{y \in V,}
\end{align*}
for some fixed $x \in V$ and $n \in \mathbb{N} \setminus \{0\}$.  Then, for any  $m \in \mathbb{N}\setminus \{0\}$
\begin{align*}
    \sum_{y \in S_m(x)} q^{\ell(y)/2} f_{x,n}(y) = \frac{1}{(m+n)^2} \bigg(2 +(m-1) \frac{q-1}{q}\bigg).
\end{align*}
\end{lem}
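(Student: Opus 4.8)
The plan is to compute the sum by organizing the sphere $S_m(x)$ according to the position of each vertex $y$ relative to the geodesic structure emanating from $x$. For a fixed vertex $y$ with $d(x,y)=m$, the quantity $f_{x,n}(y) = q^{-(\ell(x)+m)/2}/(m+n)^2$ depends only on $m$, not on the particular $y$; therefore $\sum_{y\in S_m(x)} q^{\ell(y)/2} f_{x,n}(y) = \frac{q^{-(\ell(x)+m)/2}}{(m+n)^2}\sum_{y\in S_m(x)} q^{\ell(y)/2}$, and everything reduces to evaluating $\Sigma_m := \sum_{y\in S_m(x)} q^{\ell(y)/2}$. First I would split $S_m(x)$ into two pieces: the unique vertex $p^m(x)$ which lies "above" $x$ (i.e. $p^m(x)\geq x$, with $\ell(p^m(x)) = \ell(x)+m$), contributing $q^{(\ell(x)+m)/2}$; and the remaining vertices, each of which is reached from $x$ by first going up $k$ steps to $p^k(x)$ for some $0\le k\le m-1$ and then descending $m-k$ steps into the subtree without backtracking.

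Next I would count, for each $k\in\{0,\dots,m-1\}$, the vertices $y\in S_m(x)$ whose confluent with $x$ is exactly $p^k(x)$ (equivalently, the geodesic $[x,y]$ first ascends $k$ steps and then descends $m-k$ steps). When $k=0$, $y$ is a descendant of $x$ at distance $m$; when $k\ge 1$, the descent must start by moving to a son of $p^k(x)$ other than $p^{k-1}(x)$, so there are $q-1$ choices for the first descending step and $q$ for each subsequent one, giving $(q-1)q^{m-k-1}$ such vertices; for $k=0$ there are $q^m$ descendants but one must be careful — actually the descendants of $x$ at distance $m$ number $q^m$ only if all $q$ sons are available at the first step, which they are since we are going strictly downward from $x$, so the $k=0$ count is $q\cdot q^{m-1}=q^m$. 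Each such $y$ has level $\ell(y) = \ell(p^k(x)) - (m-k) = \ell(x) + k - (m-k) = \ell(x) + 2k - m$, hence $q^{\ell(y)/2} = q^{(\ell(x)+2k-m)/2} = q^{\ell(x)/2}q^{k}q^{-m/2}$.

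Assembling the pieces, $\Sigma_m = q^{(\ell(x)+m)/2} + \sum_{k=0}^{m-1} N_k \cdot q^{\ell(x)/2} q^{k} q^{-m/2}$, where $N_0 = q^m$ and $N_k = (q-1)q^{m-k-1}$ for $1\le k\le m-1$. The $k=0$ term gives $q^m\cdot q^{\ell(x)/2}q^{-m/2} = q^{\ell(x)/2}q^{m/2}$; the terms $k=1,\dots,m-1$ give $q^{\ell(x)/2}q^{-m/2}(q-1)q^{m-1}\sum_{k=1}^{m-1} q^{k}q^{-k} = q^{\ell(x)/2}q^{-m/2}(q-1)q^{m-1}(m-1) = q^{\ell(x)/2}q^{m/2}\frac{q-1}{q}(m-1)$. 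Adding the top vertex contribution $q^{\ell(x)/2}q^{m/2}$, we obtain $\Sigma_m = q^{\ell(x)/2}q^{m/2}\bigl(2 + (m-1)\frac{q-1}{q}\bigr)$, and multiplying by $\frac{q^{-(\ell(x)+m)/2}}{(m+n)^2}$ yields exactly the claimed identity. I do not expect a serious obstacle here; the only point requiring care is the combinatorial bookkeeping of the counts $N_k$ and, in particular, getting the geometric factor $q^{k}$ to cancel against the $q^{-k}$ hidden in $N_k$ so that the sum over $k$ collapses to a multiple of $(m-1)$ — this cancellation is what produces the linear-in-$m$ term and should be checked against small cases ($m=1$ gives $\Sigma_1 = q^{\ell(x)/2}q^{1/2}\cdot 2$, consistent with $S_1(x)$ having $q+1$ neighbours, one at level $\ell(x)+1$ and $q$ at level $\ell(x)-1$).
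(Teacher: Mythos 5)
Your computation is correct and follows essentially the same route as the paper: both partition $S_m(x)$ according to how far the geodesic $[x,y]$ ascends before descending (the paper's sets $E_m^j$ and $F_m$ are exactly your $k$-strata plus the top vertex $p^m(x)$), with the same counts $(q-1)q^{m-k-1}$, $q^m$, and $1$. The only cosmetic difference is that you first factor out the radial function $f_{x,n}$ and evaluate $\sum_{y\in S_m(x)}q^{\ell(y)/2}$ separately, whereas the paper carries $f_{x,n}$ through the level-by-level sum; the cancellations and the final identity are identical.
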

\begin{proof}
We introduce the family of sets $\{E_{m}^j\}_{j=1}^{m}, F_m$ defined by
\begin{align*}
    &E_{m}^j= S_m(x) \cap \{y \ : \ \ell(y)=\ell(x)+2j-m\}=S_m(x) \cap \{y \le p^{j}(x), y \not \le p^{j-1}(x)\}, \ \ {j=1,...,m},\\ 
    &F_m=S_m(x) \cap \{y \ : \ \ell(y) = \ell(x)-m\}=S_m(x) \cap \{y \ : \ y \le x\}.
\end{align*}
Clearly $\bigg\{\{E_{m}^j\}_{j=1}^{m}, F_m\bigg\}$ is a partition of $S_m(x)$. Moreover, $|E_m^j|=(q-1)q^{m-j-1}$ if $j <m$, $|E_m^m|=1$ and $|F_m|=q^m.$ Thus, 
\begin{align*}
     \sum_{y \in S_m(x)} q^{\ell(y)/2} f_{x,n}(y) &= \sum_{j=1}^m \sum_{y \in E_m^j} q^{(\ell(x)+2j-m)/2} f_{x,n}(y) + \sum_{y \in F_m}q^{(\ell(x)-m)/2} f_{x,n}(y)  \\ 
     &= \frac{1}{(m+n)^2}\bigg(\sum_{j=1}^{m-1} \frac{q-1}{q} + 2 \bigg).
\end{align*}
\end{proof}
\begin{oss}
The above proof illustrates the algorithm on which the computation of most of the sums throughout this paper relies. Unfortunately, although the functions we will integrate are usually of the form  $f_{x,n}$, the domain of integration might not coincide with the whole sphere  $S_m(x)$.  Thus, in each specific case, we will adapt the above idea to the particular geometry of the domain.
\end{oss}
\section{Proof of Theorem \ref{th1} i)-ii)}\label{Heat1}\label{sec3}
In this section, we shall prove that the  $L^1$-norm of the heat maximal operator $\mathcal{M}_h$ defined in \eqref{defmht} is uniformly bounded on atoms and deduce that $H^1_{at}(\mu) \subset H^1_{h}(\mu)$. By using the well-known subordination formula for the Poisson semigroup, a standard argument shows that $H_h^1(\mu) \subset H^1_P(\mu)$. Thus, Theorem \ref{th1} $ii)$  will follow immediately by Theorem \ref{th1} $i)$. \\  We preliminarily 
 need to show that $\mathcal{M}_h$ is of  weak type $(1,1)$. It is worth recalling that the weak type (1,1) boundedness of the heat maximal operator associated to the combinatorial Laplacian $\Delta$ is a well-known fact proved by M. Pagliacci and M. Picardello in \cite{pp}.\\ 
 Before establishing the abovementioned properties,  we define the local maximal heat operator by
\begin{align*}
    \mathcal{M}_{\text{loc}}f(x)= \sup_{0<t<1} |\mathcal{H}_tf(x)| \qquad{\forall f \in C(V), x \in V}.
\end{align*}
\begin{prop}\label{blmh}
The operator $\mathcal{M}_\textrm{loc}$ is bounded on $L^1(\mu).$
\end{prop}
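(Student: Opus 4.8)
The plan is to show that $\mathcal{M}_{\text{loc}}$ maps $L^1(\mu)$ boundedly into itself by controlling, uniformly in $0<t<1$, the $\ell^1(\mu)$-norm of the kernel $y \mapsto H_t(x,y)\mu(y)$ summed against the test point $x$. Concretely, by the symmetry $H_t(x,y)=H_t(y,x)$ and Tonelli, it suffices to prove
\begin{align*}
    \sup_{y\in V}\ \sup_{0<t<1}\ \sum_{x\in V} H_t(x,y)\,\mu(x) \lesssim 1,
\end{align*}
since then $\|\mathcal{M}_{\text{loc}}f\|_1 \le \sum_x \mu(x)\sup_{0<t<1}\sum_y H_t(x,y)|f(y)|\mu(y) \le \big(\sup_y\sup_t\sum_x H_t(x,y)\mu(x)\big)\,\|f\|_1$. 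Note we may \emph{not} simply invoke \eqref{contraction}, which bounds $\sum_y H_t(x,y)\mu(y)$ but with the ``wrong'' measure placement relative to $\mathcal{M}_{\text{loc}}$ acting by fixing $x$; the asymmetry of $Q(x,y)$ in $\ell(x),\ell(y)$ is exactly what must be handled.

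First I would fix $y$ and decompose $V=\bigsqcup_{m\ge 0} S_m(y)$, writing $\mu(x)=q^{\ell(x)}$ and using the sharp estimate \eqref{equivs}, $H_t(x,y)\approx Q(x,y)\,s_{d(x,y)}(t)$ with $Q(x,y)=q^{[-\ell(x)/2-\ell(y)-d(x,y)]/2}$ (as written in \eqref{Q}). The point is that $Q(x,y)q^{\ell(x)}=q^{[3\ell(x)/2-\ell(y)-d(x,y)]/2}$ and, since $d(x,y)\ge |\ell(x)-\ell(y)|$, a direct check over the sphere $S_m(y)$ — organized exactly as in the partition argument of Lemma \ref{integ}, splitting $S_m(y)$ according to the level of $x$ relative to the ancestors $p^j(y)$ — yields $\sum_{x\in S_m(y)} Q(x,y)\,q^{\ell(x)} \lesssim 1$, a bound \emph{uniform in $m$}. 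Then, using Lemma \ref{fun} $i)$, $\sup_{t>0} s_m(t)\lesssim (m+1)^{-2}$, so that
\begin{align*}
    \sup_{0<t<1}\sum_{x\in V} H_t(x,y)\,\mu(x)
    \ \lesssim\ \sum_{m\ge 0}\Big(\sup_{t>0}s_m(t)\Big)\sum_{x\in S_m(y)}Q(x,y)q^{\ell(x)}
    \ \lesssim\ \sum_{m\ge 0}\frac{1}{(m+1)^2}\ \lesssim\ 1,
\end{align*}
which closes the argument. (The restriction $0<t<1$ is not even needed here; the bound holds for all $t>0$, reflecting that it is really the time-independent estimate \eqref{ineqsquare} that drives everything, and one could alternatively run the whole computation starting from \eqref{ineqsquare}.)

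The main obstacle is the combinatorial bookkeeping in showing $\sum_{x\in S_m(y)} q^{\ell(x)}Q(x,y)\lesssim 1$ uniformly in $m$: one must carefully track how $\ell(x)$ and $d(x,y)$ interact as $x$ ranges over the sphere — for the unique ``ancestor direction'' vertex $\ell(x)=\ell(y)+m$ and $Q q^{\ell(x)}=q^{(m-?)/2}\cdots$ could a priori be large, so the precise exponent arithmetic matters, and one sees that the $q^{-d(x,y)/2}$ factor in $Q$ together with the geometric count $|E_m^j|=(q-1)q^{m-j-1}$ produces a convergent geometric series in $j$ with sum $O(1)$ independent of $m$. Once this uniform spherical estimate is in hand, the summation over $m$ is immediate from Lemma \ref{fun}, and Fubini–Tonelli finishes. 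I would present the spherical estimate as the core computation and treat the rest as routine.
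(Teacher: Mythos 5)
There is a genuine gap, and it sits exactly at what you call the ``core computation.'' The claimed uniform bound $\sum_{x\in S_m(y)}Q(x,y)\,q^{\ell(x)}\lesssim 1$ is false. With the (symmetric) kernel factor $Q(x,y)=q^{-(\ell(x)+\ell(y)+d(x,y))/2}$ one has $Q(x,y)q^{\ell(x)}=q^{(\ell(x)-\ell(y)-m)/2}$ on $S_m(y)$, and running the partition of Lemma \ref{integ}: on $F_m$ the summand is $q^{-m}$ with $q^m$ vertices (contribution $1$), while on each $E_m^j$ the summand is $q^{j-m}$ with $(q-1)q^{m-j-1}$ vertices (contribution $(q-1)/q$, \emph{constant} in $j$, not a convergent geometric series). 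Summing over $j=1,\dots,m$ gives $\sum_{x\in S_m(y)}Q(x,y)q^{\ell(x)}=2+(m-1)\tfrac{q-1}{q}\approx m+1$; this is precisely the content of Lemma \ref{integ}. Feeding this into your final display yields $\sum_{m\ge0}(m+1)^{-2}\cdot(m+1)=\sum_m (m+1)^{-1}=\infty$, so the argument does not close. Your parenthetical remark that the restriction $0<t<1$ is not needed is the telltale sign: since $\sup_{t>0}s_m(t)\approx(m+1)^{-2}$ (attained near $t\approx(m+1)^2$), one actually has $\sum_{x}\sup_{t>0}H_t(x,y)\mu(x)=\infty$, so no time-independent estimate such as \eqref{ineqsquare} can possibly prove the proposition. (There is also a smaller slip in the reduction: the quantity you must control is $\sum_x\sup_{0<t<1}H_t(x,y)\mu(x)$, with the supremum \emph{inside} the sum; with the supremum outside, the expression equals $1$ by \eqref{contraction} and symmetry, but that is not what bounds the maximal operator.)

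The missing idea is to exploit $t<1$ quantitatively. For $0<t<1$ the kernel estimate \eqref{equivs} gives the much stronger bound $s_{d}(t)\lesssim\big(\tfrac{et}{d+1}\big)^{d}\le\big(\tfrac{e}{d+1}\big)^{d}$, a superexponential decay in $d=d(x,y)$ that beats both the linear growth of the spherical sums and the exponential cardinality $|S_d(y)|\approx q^d$; combined with $Q(x,y)\mu(x)=q^{(\ell(x)-\ell(y)-d)/2}\le1$ (since $\ell(x)-\ell(y)\le d(x,y)$), one gets $\sum_x\sup_{0<t<1}H_t(x,y)\mu(x)\lesssim\sum_{d\ge0}\big(\tfrac{qe}{d+1}\big)^{d}<\infty$, uniformly in $y$. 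This is the paper's proof; your Fubini--Tonelli reduction and the idea of summing sphere by sphere are fine, but the per-sphere estimate must come from the short-time decay of $s_d$, not from Lemma \ref{fun}.
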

\begin{proof}
 Let $f \in C(V)$. By \eqref{equivs} 
 \begin{align*}
      \|\mathcal{M}_{\text{loc}}f\|_1 &
       \le \sum_{y \in V} |f(y)| \sum_{x \in V} \sup_{0<t<1} H_t(x,y) \mu(x) \mu(y) \\ 
      &\lesssim \sum_{y \in V} |f(y)|\mu(y) \sum_{x \in V} \sup_{0<t<1}   Q(x,y) s_{d(x,y)}(t) \mu(x).
 \end{align*}
 It is easy to see that the term inside the second sum can be dominated as follows
 \begin{align*}
       Q(x,y) s_{d(x,y)}(t) \mu(x)
       &\lesssim q^{\frac{-d(x,y)-\ell(y)+\ell(x)}{2}} \bigg(\frac{et}{d(x,y)+1}\bigg)^{d(x,y)} \\ &\le q^{\frac{-d(x,y)-\ell(y)+\ell(x)}{2}}\bigg(\frac{e}{d(x,y)+1}\bigg)^{d(x,y)} \qquad{0<t<1.}
 \end{align*} Recalling that $\ell(x)-\ell(y) \le d(x,y)$, it suffices to notice that
 \begin{align*}
  \sum_{x \in V}  Q(x,y) \sup_{0<t<1} s_{d(x,y)}(t) \mu(x) \lesssim  \sum_{x \in V}  \bigg(\frac{e}{d(x,y)+1}\bigg)^{d(x,y)} =\sum_{d=0}^\infty\bigg(\frac{qe}{d+1}\bigg)^d < +\infty.
 \end{align*}
\end{proof}

\begin{prop} The operator $\mathcal{M}_h$ is of  weak type (1,1) and bounded on $L^p(\mu)$ for all $p \in (1,\infty].$ 
\end{prop}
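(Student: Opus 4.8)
The plan is as follows. The $L^\infty(\mu)$-bound is immediate: by \eqref{contraction}, $\|\mathcal H_tf\|_\infty\le\|f\|_\infty$ for every $t>0$. Granting the weak type $(1,1)$ inequality, the $L^p(\mu)$-bounds for $1<p<\infty$ then follow by Marcinkiewicz interpolation with the $L^\infty$-bound, so the whole point is the weak type $(1,1)$ estimate. I would prove it by writing $\mathcal M_hf\le\mathcal M_{\text{loc}}f+\mathcal M_{\text{glob}}f$, where $\mathcal M_{\text{glob}}f:=\sup_{t\ge1}|\mathcal H_tf|$. By Proposition \ref{blmh} the operator $\mathcal M_{\text{loc}}$ is bounded on $L^1(\mu)$, hence of weak type $(1,1)$, so everything reduces to $\mathcal M_{\text{glob}}$.

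For $\mathcal M_{\text{glob}}$ I would apply the Calderón--Zygmund theory for the flow measure $\mu$ of \cite{LSTV} to the linear, Banach-space valued operator $f\mapsto(\mathcal H_tf)_{t\ge1}$ taking values in $X=L^\infty([1,\infty))$, whose pointwise $X$-norm is precisely $\mathcal M_{\text{glob}}f$. Two ingredients are needed. First, the $L^2(\mu)$-boundedness of $\mathcal M_{\text{glob}}$ (equivalently of $\mathcal M_h$): since $\mathcal L=I-A$ with $A$ a Markov operator that is self-adjoint on $L^2(\mu)$, the semigroup $(\mathcal H_t)_{t>0}$ is a contraction on every $L^p(\mu)$, self-adjoint on $L^2(\mu)$, positivity preserving, and satisfies $\mathcal H_t\mathbf 1=\mathbf 1$; hence Stein's maximal theorem for symmetric diffusion semigroups yields $\|\mathcal M_hf\|_2\lesssim\|f\|_2$. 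I emphasize that the pointwise majorant $\mathcal M_{\text{glob}}f(x)\lesssim\sum_{y}Q(x,y)(d(x,y)+1)^{-2}|f(y)|\mu(y)$ coming from \eqref{ineqsquare} is of no use here: because of the exponential growth of $\mu$ this majorant is bounded on no $L^p(\mu)$, so the cancellation contained in the semigroup must really be exploited at this step. Second, the Hörmander-type integral condition for the kernel $(H_t(x,y))_{t\ge1}$: for every admissible trapezoid $R=R_{h'}^{h''}(y_R)$ supporting an atom and a suitable enlargement $R^{*}\supset R$ (say, the ball of radius $\approx h''$ centred at $y_R$), one has to establish
\[
\int_{V\setminus R^{*}}\sup_{t\ge1}\bigl|H_t(x,y)-H_t(x,y_R)\bigr|\,d\mu(x)\lesssim1\qquad\text{uniformly in }y\in R.
\]
Together with the $L^2$-bound and the Calderón--Zygmund decomposition adapted to $\mathcal F$, this gives the weak type $(1,1)$ bound for $\mathcal M_{\text{glob}}$.

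To prove the displayed inequality I would telescope along the geodesic $[y,y_R]=\{y,p(y),\dots,y_R\}$, writing $\sup_{t\ge1}|H_t(x,y)-H_t(x,y_R)|\le\sum_{z}\sup_{t\ge1}|H_t(x,z)-H_t(x,p(z))|$ with $z$ ranging over the at most $h''$ vertices of $[y,y_R)$, and keeping $y$ fixed throughout the subsequent integration in $x$. For $x\notin R^{*}$ one has $d(x,z)\gtrsim h''$ for every such $z$ (since $d(z,y_R)<h''$). In the positions with $x\not\le z$, Lemma \ref{ktl}(ii) gives $\sup_{t}|H_t(x,z)-H_t(x,p(z))|\lesssim Q(x,z)(d(x,z)+1)^{-3}$, and the summation scheme behind Lemma \ref{integ} yields $\sum_{d(x,z)\ge h''}Q(x,z)(d(x,z)+1)^{-3}\mu(x)\lesssim\sum_{m\ge h''}m^{-2}\lesssim(h'')^{-1}$. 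In the positions with $x\le z$ the estimate of Lemma \ref{ktl}(ii) is false, so I would use only the trivial bound $\sup_t|H_t(x,z)-H_t(x,p(z))|\lesssim Q(x,z)(d(x,z)+1)^{-2}$ from \eqref{ineqsquare}; since $Q(x,z)\mu(x)=q^{-d(x,z)}$ when $x\le z$, the exponential factor $q^{-d(x,z)}$ compensates the cardinality $q^{d(x,z)}$ of the relevant spheres and one again gets $\sum_{x\le z,\ d(x,z)\ge h''}Q(x,z)(d(x,z)+1)^{-2}\mu(x)\lesssim\sum_{m\ge h''}m^{-2}\lesssim(h'')^{-1}$. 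Summing over the at most $h''$ vertices $z$ of $[y,y_R)$ produces a bound that is $O(1)$ uniformly in $R$ and $y$, and then $\int_{V\setminus R^{*}}\mathcal M_{\text{glob}}a\,d\mu\le\frac1{\mu(R)}\sum_{y\in R}\mu(y)\cdot O(1)=O(1)$ for every atom $a$.

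The main obstacle is exactly this Hörmander estimate, for two reasons. The telescoping creates a number of terms comparable to the height $h''$ of $R$, so the argument closes only because each term integrates to $\approx(h'')^{-1}$; this forces one to keep $y$ fixed during the integration (summing instead over all the vertices lying on the geodesics $[y,y_R]$ as $y$ varies over $R$ overcounts by an exponential factor $\approx q^{h''}$) and to choose the enlargement $R^{*}$ so that its complement truly lies at distance $\gtrsim h''$ from $y_R$ — here the constraint $2\le h''/h'\le12$ on admissible trapezoids is what keeps all the relevant scales comparable. Moreover one must isolate the descendant positions $x\le z$, which are not covered by the gradient estimate Lemma \ref{ktl}(ii) and where it is the exponential decay of $Q(x,z)\mu(x)$ that rescues the bound. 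Everything else is a routine application of the summation algorithm of Lemma \ref{integ}.
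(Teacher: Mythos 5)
Your reduction to the weak type $(1,1)$ estimate is sound: the $L^\infty$ bound follows from \eqref{contraction} and positivity, and Stein's maximal theorem for symmetric diffusion semigroups does apply here (the semigroup is Markovian, positivity preserving, contractive on $L^1(\mu)$ and $L^\infty$, and self-adjoint on $L^2(\mu)$), so the $L^p$ bounds for $p>1$ are available. The genuine gap is in the H\"ormander estimate, and it comes from an incompatibility between your two requirements on the enlargement $R^*$. If $R^*$ is the metric ball $B(y_R,\approx h'')$, then indeed $d(x,z)\gtrsim h''$ for all $x\notin R^*$ and all $z\in[y,y_R]$, and your sphere-by-sphere sums close; but in this setting $\mu(B(y_R,r))\approx q^{\ell(y_R)}q^{r}$, so $\mu(R^*)\approx q^{h''}\mu(R)$, and the Calder\'on--Zygmund step $\mu(\cup_i R_i^*)\lesssim \lambda^{-1}\|f\|_1$ collapses. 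This is precisely why the theory of \cite{LSTV} uses trapezoidal enlargements such as $R^*=\{x: d(x,R)\le h'\}$ (as in Proposition \ref{ub}), for which $\mu(R^*)\lesssim\mu(R)$. With that admissible choice, however, $(R^*)^c$ contains all $x\not\le y_R$, including points with $d(x,y_R)=O(1)$, and your telescoping runs all the way up to $y_R$, through vertices $z$ at every depth $k=1,\dots,d(y,y_R)$ below $y_R$. For such $x$ and $z$ one has $Q(x,z)\mu(x)=Q(x,y_R)\mu(x)$ and $d(x,z)=d(x,y_R)+k$, and the summation scheme of Lemma \ref{integ} gives
\begin{align*}
\sum_{x\not\le y_R}\frac{Q(x,z)}{(d(x,z)+1)^3}\,\mu(x)\;\approx\;\sum_{m\ge 1}\frac{m}{(m+k+1)^{3}}\;\approx\;\frac1k ,
\end{align*}
so summing over $k\le h''$ produces $\log h''$, not $O(1)$. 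Either way your displayed H\"ormander inequality is not uniform in $R$.

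The repair is the one the paper uses in Proposition \ref{ub} for the atom bound: telescope only \emph{within} $R$, down to a reference vertex $y^L$ at the top level of $R$ (so that every intermediate vertex has depth in $[h',h'']$ below $y_R$, and $\sum_{k=h'}^{h''}k^{-1}\lesssim\log(h''/h')\lesssim 1$ by the admissibility constraint $h''/h'\le 12$), exploiting that $H_t(x,\cdot)$ restricted to $R$ is radial when $x\not\le y_R$; the region $x\le y_R$, $x\notin R^*$ is then handled without cancellation. Note, though, that this is exactly the paper's proof of the \emph{uniform bound on atoms}, which by itself does not yield weak type $(1,1)$. For the weak type $(1,1)$ the paper takes a completely different and much softer route, requiring no kernel regularity at all: using the factorization of $s_d$ into an increasing and a decreasing function of $t$, one shows the pointwise domination $\mathcal{H}_tf\lesssim\frac{1}{2t}\int_0^{2t}\mathcal{H}_zf\,dz$ for $f\ge 0$, and then invokes the Hopf--Dunford--Schwartz ergodic theorem for the $L^1$--$L^\infty$ contraction semigroup $(\mathcal{H}_t)_t$. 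You may want to compare the two: your Calder\'on--Zygmund route, once corrected, proves more (it is the engine behind Proposition \ref{ub}), but for the weak $(1,1)$ statement alone the ergodic-theorem argument is far shorter and avoids the enlargement issue entirely.
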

\begin{proof}
It suffices to prove the weak type (1,1) boundedness of $\mathcal{M}_h$ and then use interpolation.\\  Pick $f \in L^1(\mu)$ and assume without loss of generality $f \ge 0$. Then, for every $t>0$ we have
\begin{align*}
    &\frac{1}{2t} \int_0^{2t} \mathcal{H}_z f(x) \ dz \ge \frac{1}{2t} \int_t^{2t} \mathcal{H}_z f(x) \ dz 
    =\frac{1}{2t} \sum_{y \in V} f(y) \int_t^{2t} {H}_z(x,y) \ dz \mu(y) \\&\gtrsim \frac{1}{2t} \sum_{y \in V} f(y) \int_t^{2t} Q(x,y)s_{d}(z) \ dz \mu(y),
\end{align*}
where $d=d(x,y)$. Recall that $s_d(z)=(d+1)\frac{e^{(d+1) \varphi(z/(d+1))}}{{{z}}[1+(d+1)^2+z^2]^{1/4}}$ where $\varphi$ is defined in \eqref{fi}, and 
\begin{align*}
    &\mathbb{R}^+ \ni z \mapsto e^{(d+1) \varphi(z/(d+1))} \ \ \ \text{is increasing}, \\ 
    &\mathbb{R}^+ \ni z \mapsto \frac{1}{{{{z}}[1+(d+1)^2+{{z^2}}]^{1/4}}} \ \ \ \text{is decreasing},
\end{align*}
thus
\begin{align} \label{supremum}
   \nonumber&\frac{1}{2t} \int_0^{2t} \mathcal{H}_z f(x) \ dz \gtrsim  \sum_{y \in V} f(y)  Q(x,y) \frac{(d+1)e^{(d+1) \varphi(t/(d+1))}}{{{2t}}[1+(d+1)^2+({{2t}})^2]^{1/4}}\mu(y) \\ &\gtrsim  \sum_{y \in V} f(y)H_t(x,y) \mu(y)=\mathcal{H}_tf(x),
\end{align}
where in the last line we have used \eqref{equivs}.
Observe that, by \eqref{contraction}, $(\mathcal{H}_t)_t$ is a strongly measurable semigroup which satisfies the  contraction property, namely, if $f \in L^1(\mu)$
\begin{align*}
    \|\mathcal{H}_tf\|_1\le \sum_{x \in V} \sum_{y \in V} |f(y)| H_t(x,y) \mu(y) \mu(x) =\sum_{y \in V}|f(y)|\sum_{x \in V}H_t(x,y) \ \mu(x) \ \mu(y)=\|f\|_1.
\end{align*}
Thus, by  the Hopf-Dunford-Schwartz Theorem (see \cite{Dun}), the ergodic operator associated to the heat semigroup is of  weak type (1,1).  We conclude passing to the supremum in \eqref{supremum}. \end{proof}

\begin{prop}\label{ub}
There exists a positive constant $C>0$ such that $\|\mathcal{M}_ha\|_{1} \le C$ for any   $(1,\infty)$-atom $a$.
\end{prop}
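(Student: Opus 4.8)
The plan is to prove the uniform bound $\|\mathcal{M}_h a\|_1 \le C$ for an arbitrary $(1,\infty)$-atom $a$ supported on a trapezoid $R = R_{h'}^{h''}(y_R) \in \mathcal{F}$ (the case $R = \{y\}$ being trivial since then $a \equiv 0$ by the cancellation condition, or handled directly). The key idea is the standard Calderón--Zygmund splitting of the domain into a \emph{local part} near $R$ and a \emph{global part} far from $R$. For the local part, I would fix an enlarged trapezoid or ball $R^*$ — say the set of vertices within distance comparable to $h''$ of $y_R$, so that $\mu(R^*) \approx \mu(R)$ by the local doubling property — and estimate $\int_{R^*} \mathcal{M}_h a \, d\mu$ by Cauchy--Schwarz: $\int_{R^*} \mathcal{M}_h a \, d\mu \le \mu(R^*)^{1/2} \|\mathcal{M}_h a\|_2 \lesssim \mu(R)^{1/2} \|a\|_2 \le \mu(R)^{1/2} \|a\|_\infty \mu(R)^{1/2} \le 1$, using the $L^2(\mu)$-boundedness of $\mathcal{M}_h$ established in the previous proposition together with $\|a\|_\infty \le 1/\mu(R)$.

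\medskip

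The substantive part is the global estimate: bounding $\int_{V \setminus R^*} \mathcal{M}_h a \, d\mu$. Here I would use the cancellation of the atom to write, for $x \notin R^*$,
\begin{align*}
\mathcal{H}_t a(x) = \sum_{y \in R} \big(H_t(x,y) - H_t(x, \hat y)\big) a(y)\, \mu(y)
\end{align*}
for a suitable reference vertex $\hat y$ (for instance $\hat y = y_R$, or a predecessor/ancestor chain), so that $|\mathcal{M}_h a(x)| \lesssim \frac{1}{\mu(R)} \sum_{y \in R} \sup_{t>0}|H_t(x,y) - H_t(x,\hat y)| \, \mu(y)$. The supremum of the heat-kernel difference is exactly what Lemma \ref{ktl} $ii)$ and Lemma \ref{lemma2} control: up to the geometry, $\sup_t |H_t(x,y) - H_t(x,p(y))| \lesssim Q(x,y)/(d(x,y)+1)^3$. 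Telescoping along the geodesic from $y$ to $\hat y$ inside $R$ (whose length is at most $h'' \lesssim h'$, comparable to the "width" of $R$) and summing these bounds, one reduces the global integral to a sum of the form $\frac{h''}{\mu(R)} \sum_{y \in R} \int_{V \setminus R^*} Q(x,y)/(d(x,y)+1)^3 \, d\mu(x)$, and then Lemma \ref{integ} (summing $Q(x,y)/(d(x,y)+1)^2$-type kernels over spheres $S_m(x)$, or rather over spheres centered at $y$) is applied to show that $\sum_{x \in V \setminus R^*} Q(x,y)(d(x,y)+1)^{-3}\mu(x) \lesssim 1/h''$ uniformly in $y \in R$; here the extra power of $d+1$ beyond the critical exponent $2$ gives the decay in $h''$ that cancels the factor $h''$ from telescoping. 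Some care is needed with the directionality hypothesis $x \not\le y$ (respectively $y \not\le x$) in Lemmas \ref{ktl} and \ref{lemma2}: for vertices $x$ below $R$ in the partial order one uses one version of the gradient estimate, for vertices not below $R$ the other, splitting $V \setminus R^*$ accordingly.

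\medskip

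The main obstacle I anticipate is precisely this bookkeeping of the tree geometry: a trapezoid $R$ is not a ball, so the enlarged set $R^*$, the choice of $\hat y$ and of the telescoping path, and the separation of $V \setminus R^*$ into the "cone below $R$" and its complement all have to be set up compatibly with the hypotheses $x \le y$ / $x \not\le y$ in the cited lemmas. Once the domain is correctly decomposed, each piece is a sum over spheres handled by the algorithm in Lemma \ref{integ} (adapted, as the remark after it warns, to the actual domain of summation, which will be a sphere $S_m(y)$ intersected with a half-space in the partial order). I would also need to check that the constant $2 \le h''/h' \le 12$ really lets me treat $h''$ and $h'$ as comparable and bounds the telescoping path length by a constant multiple of the effective scale, so that no logarithmic losses appear. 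Modulo these geometric details the estimates are of the expected Calderón--Zygmund type and the constant $C$ comes out independent of $R$.
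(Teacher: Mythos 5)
Your local estimate (Cauchy--Schwarz on an enlargement $R^*$ with $\mu(R^*)\lesssim\mu(R)$, plus the $L^2(\mu)$-boundedness of $\mathcal{M}_h$) is exactly the paper's, and your cancellation argument is also the paper's on the part of $(R^*)^c$ lying \emph{outside} the cone below $y_R$: there every $x$ satisfies $x\not\le y$ for all $y\in R$, Lemma \ref{ktl} $ii)$ applies to each telescoping step, the path length is $\lesssim h''-h'$, and the sphere sums of $Q(x,y)(d(x,y)+1)^{-3}\mu(x)$ give $\lesssim 1/h'$, so the two factors cancel as you say. The gap is on the other region, $\Gamma_1=\{x\in(R^*)^c : x\le y_R\}$. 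There the hypothesis $x\not\le y$ of Lemma \ref{ktl} fails for the $y\in R$ lying above $x$, and this is not a bookkeeping issue: when $x\le y$ one has $Q(x,p(y))=q^{-1}Q(x,y)$ while $s_{d(x,y)}$ and $s_{d(x,y)+1}$ are comparable, so by \eqref{equivs} the values $H_t(x,y)$ and $H_t(x,p(y))$ differ by a factor of order $q$ and their difference is of the same order as the terms themselves — there is no small ``gradient'' to telescope. The ``other version'' of the gradient estimate you invoke for this region does not exist: Lemma \ref{lemma2} $i)$ is an estimate integrated in $t$ for the Riesz kernel in the $x$-variable, not a sup-in-$t$ bound in the $y$-variable, so it cannot feed the maximal function.

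The paper's resolution on $\Gamma_1$ is to abandon cancellation altogether and use only the size condition of the atom together with the plain bound \eqref{ineqsquare}, $\sup_{t>0}H_t(x,y)\lesssim Q(x,y)(d(x,y)+1)^{-2}$. This suffices because for $x\le y_R$ the weight $Q(x,y)\mu(x)$ decays geometrically as $x$ descends: each level at distance $k$ below $y_R$ carries total measure $q^{\ell(y_R)}$, the levelwise sums over $R$ produce a bound of the form $h'\,q^{-\ell(y_R)}(d(x,y_R)-h'')^{-2}$, and summing over $\Gamma_1$ yields $\sum_{j\ge h'}h'/j^2\lesssim 1$ — the factor $h'$ is absorbed precisely because $x\notin R^*$ forces $d(x,R)>h'$, so the sum starts at $j\approx h'$. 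So your proof becomes correct once you replace, on the cone below $y_R$, the telescoping argument by this direct size estimate; as written, the plan fails there.
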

\begin{proof}
 Let $a$ be a $(1,\infty)$-atom. 
 If $\mathcal{F} \ni R=R_{h'}^{h''}(y_R)$ is the support of $a$, then we define its enlargement $R^*=\{x \in V\ : \ d(x,R) \le h'\}.$ 
 By the Cauchy-Schwarz inequality and the $L^2(\mu)$-boundedness of $\mathcal{M}_h$
 \begin{align*}
     \|\mathcal{M}_ha\|_{L^1(R^*)}\le \|\mathcal{M}_ha\|_{2}\mu(R^*)^{1/2} \le C'\|\mathcal{M}_h\|_{2\to2} \bigg(\frac{\mu(R^*)}{\mu(R)}\bigg)^{1/2} \le C,
 \end{align*}
 where we have used the fact that $\mu(R^*) \lesssim \mu(R)$, see \cite{LSTV}. \\ 
 We now split $(R^*)^c$ in two regions, namely, 
 \begin{align*}
     &\Gamma_1=\{ x \in (R^*)^c \ : \ x \le y_R\}, \\
     &\Gamma_2=(R^*)^c \setminus \Gamma_1=\{x : \ x \not\le y_R\}.  
 \end{align*}
 We start with 
 \begin{align*}
     &\sum_{x \in \Gamma_1} \mathcal{M}_ha(x) \mu(x)
     \lesssim \sum_{x \in \Gamma_1} \sup_{t>0} \sum_{y \in R}{Q(x,y)s_{d(x,y)}(t)}|a(y)|  \mu(y)\mu(x).
 \end{align*}
 By exploiting \eqref{ineqsquare} and the size condition of the atom, we get
 \begin{align*}
    \sum_{x \in \Gamma_1} \mathcal{M}_ha(x)\mu(x) 
    \lesssim \sum_{x \in \Gamma_1}  \sum_{y \in R} \frac{q^{-\ell(x)/2+\ell(y)/2-d(x,y)/2}}{(d(x,y)+1)^2} \frac{1}{\mu(R)} \mu(x).
 \end{align*} 
 If $x \in \Gamma_1$, then
 \begin{align*}
   &\frac{1}{\mu(R)} \sum_{y \in R} \frac{q^{-\ell(x)/2+\ell(y)/2-d(x,y)/2}}{(d(x,y)+1)^2} =\sum_{l=\ell(y_R)-h''+1}^{\ell(y_R)-h'}\frac{1}{\mu(R)} \sum_{y \in R \cap \{\ell(y)=l\}} \frac{q^{-\ell(x)/2+\ell(y)/2-d(x,y)/2}}{(d(x,y)+1)^2}. \end{align*}  We briefly explain how to compute the above sum. Fix $x \in \Gamma_1$ and an integer $l \in [\ell(y_R)-h''+1, \ell(y_R)-h'].$ Then, there exist  \\ $\bullet$ one vertex $ y_l \ge x$ in $R$ at level $\ell(y_l)=l$. In this case $d(x,y_l)=\ell(y_l)-\ell(x);$\\ 
 $\bullet$ $q-1$ vertices which lie at the same level as $y_l$ which belong to $U_{l,1}=\{y \ : \ell(y)=\ell(y_l), y \le p(y_l), y \ne y_l)\}$. In this case, for any $y \in U_{l,1}$,  $d(y,x)=d(y_l,x)+2$; \\ 
$\bullet$ $(q-1)q$ vertices which lie at the same level as $y_l$ which belong to $U_{l,2}=\{y \ : \ell(y)=\ell(y_l), y \le p^2(y_l), y \not\le p(y_l)\}$. In this case, for any $y \in U_{l,2}$,  $d(y,x)=d(y_l,x)+4$;\\ 
 \vdots \\ 
 $\bullet$ $(q-1)q^{d(y_l,y_R)-1}$ vertices which lie  at the same level as $y_l$ which belong to $U_{l,d(y_l,y_R)}=\{y \ : \ell(y)=\ell(y_l), y \le y_R, y \not\le p^{d(y_l,y_R)-1}(y_l)\}.$  In this case, for any $y \in U_{l,d(y_l,y_R)}$,  $d(y,x)=d(y_l,x)+2d(y_l,y_R)$. \\ 
We can rewrite the previous sum  as
  \begin{align*}
      \sum_{y \in R \cap \{\ell(y)=l\}} \frac{q^{-\ell(x)/2+\ell(y)/2-d(x,y)/2}}{(d(x,y)+1)^2} &=1 \cdot \frac{1}{(d(x,y_l)+1)^2}+\sum_{j=1}^{d(y_l,y_R)} (q-1)q^{j-1} \cdot \frac{q^{(d(y_l,x)-d(y_l,x)-2j)/2}}{(d(x,y_l)+2j+1)^2} \\ 
      &=1 \cdot \frac{1}{(d(x,y_l)+1)^2}+\sum_{j=1}^{d(y_l,y_R)} (q-1)q^{-1} \cdot \frac{1}{(d(x,y_l)+2j+1)^2} \\ 
      &\lesssim \frac{h''+h'}{(d(x,y_R)-h'')^2},
  \end{align*} 
  since $d(x,y_l)=d(x,y_R)-d(y_l,y_R) \ge d(x,y_R)-h''.$
  Summing up over the $h''-h'$ level which intersects $R$, we get
  \begin{align*}
      \frac{1}{\mu(R)} \sum_{y \in R} \frac{q^{-\ell(x)/2+\ell(y)/2-d(x,y)/2}}{(d(x,y)+1)^2}  \lesssim \frac{h''-h'}{q^{\ell(y_R)}(h''-h')} \cdot \frac{h''+h'}{(d(x,y_R)-h'')^2} \lesssim \frac{h'}{q^{\ell(y_R)}(d(x,y_R)-h'')^2}.
  \end{align*}
  We conclude that
 \begin{align*}
     &\sum_{x \in \Gamma_1} \frac{1}{q^{\ell(y_R)}} \frac{h'}{(d(x,y_R)-h'')^2} \mu(x)=\sum_{x \in \Gamma_1} \frac{q^{\ell(y_R)-d(x,y_R)}}{q^{\ell(y_R)}} \frac{h'}{(d(x,y_R)-h'')^2} \\ &\le \sum_{j \ge h'} \frac{h'}{j^2} \lesssim 1.
 \end{align*}

Now we shall integrate on $\Gamma_2$. In this case we need to use the cancellation condition of the atom. 

 It is worth noticing that the function $ R \ni y \mapsto H_t(x,y)$ with $x \in \Gamma_2$ fixed, is radial (namely, it depends only on $d(x,y)$ or equivalently, in this particular case, it depends only on $\ell(y)$). Let $y^L$ denote a vertex of maximum level in $R$. We have $d(x,y^L)=d(x,y_R)+h'$ for any $x \in \Gamma_2$.  Given a vertex $y \in R$, let $\overline{y}$ denote the predecessor of $y$ of maximum level in $R$. An easy application of Lemma \ref{ktl} and the fact that  $\ell(p^j(y))+d(x,p^j(y))=\ell(y_R)+d(x,y_R)$ for every $1 \le j \le d(y,\overline{y})$, $x \in \Gamma_2$ and $y \in R$, yield
\begin{align}\label{(cuore)}
   \nonumber \sup_{t>0}|H_t(x,y)-H_t(x,y^L)| &\le\sum_{j=0}^{d(y,\overline{y})}\sup_{t>0}|H_t(x,p^j(y))-H_t(x,p^{j+1}(y))|  \\ \nonumber &\lesssim \sum_{j=0}^{d(y,\overline{y})}  \frac{q^{-(\ell(x)+\ell(p^j(y))+d(x,p^j(y)))/2}}{(d(x,p^j(y))+1)^3} \\ 
    \nonumber &\le\sum_{j=0}^{d(y,\overline{y})}  \frac{q^{-(\ell(x)+\ell(p^j(y))+d(x,p^j(y)))/2}}{(d(x,y_R)+h')^3} \\ 
    &\le \frac{(h''-h') q^{-(\ell(x)+\ell(y_R)+d(x,y_R))/2} }{(d(x,y_R)+h')^3},
\end{align}
where in the second line we have used Lemma \ref{ktl} $ii)$ and $p^0(y)=y$. By the cancellation and the size condition of the atom and \eqref{(cuore)}
\begin{align*}
&\sup_{t>0}\bigg|\sum_{y \in R}H_t(x,y) a(y)  \mu(y)\bigg| =\sup_{t>0}\bigg|\sum_{y \in R}(H_t(x,y)-H_t(x,y^L)) a(y) \mu(y)\bigg|  \\ 
    &\le \sum_{y \in R}\sup_{t>0}|H_t(x,y)-H_t(x,y^L)| \frac{\mu(y)}{\mu(R)}  \lesssim \frac{(h''-h') q^{-(\ell(x)+\ell(y_R)+d(x,y_R))/2} }{(d(x,y_R)+h')^3}. 
\end{align*}
It follows that
\begin{align*}
    \|\mathcal{M}_ha\|_{L^1(\Gamma_2)}&=\sum_{x \in \Gamma_2}q^{\ell(x)} \sup_{t>0}\Bigg| \sum_{y \in R} H_t(x,y)a(y)\mu(y)\bigg|\\ 
    &=\sum_{x \in \Gamma_2}q^{\ell(x)} \sup_{t>0}\Bigg| \sum_{y \in R}\bigg( H_t(x,y)-H_t(x,y^L)\bigg)a(y)\mu(y)\bigg| \\ 
    &\lesssim \sum_{x \in \Gamma_2}q^{\ell(x)/2-\ell(y_R)/2-d(x,y_R)/2} \frac{(h''-h')}{(d(x,y_R)+h')^3}.
\end{align*}
We can integrate over the intersection of the spheres $S_m(y_R)$ and $\Gamma_2$ for $m \ge 1$. Arguing as in  Lemma \ref{integ} we get
\begin{align*}
    &\sum_{x \in \Gamma_2\cap S_m(y_R)}q^{\ell(x)} \sup_{t>0}\Bigg| \sum_{y \in R} H_t(x,y)a(y) \mu(y)\bigg|  \\ 
    &\lesssim 
    \frac{(h''-h')q^{-\ell(y_R)/2-m/2}}{(m+h')^3} \bigg[(q-1)\sum_{j=1}^{m-1}\bigg(q^{m-(j+1)}q^{(\ell(y_R)+2j-m)/2}\bigg)+q^{(m+\ell(y_R))/2} \bigg] \\ 
    &\lesssim \frac{(h''-h')m}{(m+h')^3} \le \frac{(h''-h')}{(m+h')^2}.
\end{align*}
Summing up over $m \ge 1$, we obtain 
\begin{align*}
    &\sum_{m=1}^\infty\sum_{x \in \Gamma_2\cap S_m(y_R)}q^{\ell(x)} \sup_{t>0}\Bigg| \sum_{y \in R} H_t(x,y)a(y) \mu(y)\bigg|  \lesssim \sum_{m=1}^\infty \frac{(h''-h')}{(m+h')^2} \lesssim 1.
\end{align*}
This concludes the proof.
\end{proof}
Using the weak type (1,1) boundedness of $\mathcal{M}_h$, it is easy to prove that the uniform boundedness of $\|\mathcal{M}_ha\|_{1}$ where $a$ ranges over $(1,\infty)$-atoms, implies the boundedness of $\mathcal{M}_h$ from $H^{1}_{at}(\mu)$ to $L^1(\mu).$ Indeed, the following can be proved by a standard argument.
\begin{lem}\label{lemgeneral}
 Let $\mathcal{K}:H^1_{at}(\mu) \to L^1(\mu)$ be a positive sublinear operator, i.e., $\mathcal{K}f \ge 0$, \ $\mathcal{K}(\alpha f)=|\alpha| \mathcal{K}(f)$  and 
 \begin{align*}
   \mathcal{K}(f+g)(x) \le \mathcal{K}(f)(x)+\mathcal{K}(g)(x), \qquad{\forall x \in V,}
 \end{align*} where $\alpha \in \mathbb{C}, f,g \in H^1_{at}(\mu)$. Suppose that there exists a positive constant $C$ such that
 \begin{align*}
     \|\mathcal{K}a\|_1 \le C,
 \end{align*}
 for all  ${(1,\infty)}$-atoms $a$. If  $\mathcal{K}$ is of  weak type (1,1), then   $$\|\mathcal{K}f\|_1 \lesssim \|f\|_{H^1_{at}} \qquad{\forall f \in H^1_{at}(\mu).}$$ 
 \end{lem}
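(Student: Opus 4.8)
The plan is to follow the classical Coifman--Weiss-style argument adapted to this non-doubling but weak-type-$(1,1)$ setting. First I would fix $f \in H^1_{at}(\mu)$ and, given $\varepsilon>0$, choose an atomic decomposition $f=\sum_j \lambda_j a_j$ with $\sum_j |\lambda_j| \le \|f\|_{H^1_{at}}+\varepsilon$. The natural temptation is to write $\|\mathcal{K}f\|_1 \le \sum_j |\lambda_j| \, \|\mathcal{K}a_j\|_1 \le C \sum_j |\lambda_j|$ directly from sublinearity and the uniform bound on atoms; the subtlety, and the reason the weak type $(1,1)$ hypothesis is needed, is that sublinearity only controls \emph{finite} partial sums, so one cannot pass to the infinite sum without an a priori convergence/continuity statement. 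So the real content is a limiting argument.

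The key steps, in order, would be: \textbf{(1)} Show $\mathcal{K}\big(\sum_{j\le N}\lambda_j a_j\big) \le \sum_{j\le N}|\lambda_j|\,\mathcal{K}a_j$ pointwise for each finite $N$ by iterating the sublinearity inequality, hence $\|\mathcal{K}(S_N)\|_1 \le C\sum_{j\le N}|\lambda_j| \le C(\|f\|_{H^1_{at}}+\varepsilon)$, where $S_N=\sum_{j\le N}\lambda_j a_j$. \textbf{(2)} Note $S_N \to f$ in $L^1(\mu)$ since $\sum_j|\lambda_j|<\infty$ and each atom has $\|a_j\|_1 \le 1$ (from $(i)$–$(ii)$ in the atom definition, $\|a_j\|_1 \le \|a_j\|_\infty \mu(R_j) \le 1$). \textbf{(3)} Use the weak type $(1,1)$ bound on $\mathcal{K}$: from $|\mathcal{K}(S_N) - \mathcal{K}(S_M)| \le \mathcal{K}(S_N - S_M)$ and $\|S_N-S_M\|_1 \to 0$, deduce that $\mathcal{K}(S_N)$ is Cauchy in measure, hence a subsequence converges $\mu$-a.e. (in fact pointwise, since $\mu$ is a counting-type measure with full support, convergence in measure on finite sets gives pointwise convergence along a subsequence). \textbf{(4)} Again using weak type $(1,1)$ and $S_N \to f$ in $L^1$, one gets $\mathcal{K}(S_N) \to \mathcal{K}f$ in measure (from $|\mathcal{K}(S_N)-\mathcal{K}f| \le \mathcal{K}(S_N-f)$ and $\|S_N-f\|_1\to 0$), so the a.e.\ limit from step (3) must be $\mathcal{K}f$. \textbf{(5)} Apply Fatou's lemma along the convergent subsequence: $\|\mathcal{K}f\|_1 \le \liminf_N \|\mathcal{K}(S_N)\|_1 \le C(\|f\|_{H^1_{at}}+\varepsilon)$, and let $\varepsilon \to 0$.

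The step I expect to be the main obstacle is step (3)--(4): making precise the passage from "weak type $(1,1)$" plus "$L^1$-Cauchy inputs" to "pointwise a.e.\ convergent outputs whose limit is $\mathcal{K}f$." One must be careful that $\mathcal{K}$ is only assumed defined on $H^1_{at}(\mu)$, so $\mathcal{K}f$ makes sense, but the partial sums $S_N$ are finite linear combinations of atoms and hence also in $H^1_{at}(\mu)$, so $\mathcal{K}(S_N)$ is legitimate; the weak-type estimate is what is invoked as an inequality of the form $\mu(\{\mathcal{K}g > \lambda\}) \le C\lambda^{-1}\|g\|_1$ for $g \in H^1_{at}(\mu)$, applied to $g = S_N - S_M$ and $g = S_N - f$. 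On a tree with the counting-like measure $\mu$ every singleton has positive finite measure, so convergence in measure is equivalent to convergence on each fixed finite set, which upgrades cleanly to pointwise convergence of a diagonal subsequence; this is where the discrete structure makes the argument cleaner than in the continuous case. Once the identification of the limit is in hand, Fatou closes the argument immediately. I would remark that the sublinear operators $\mathcal{M}_h$, $\mathcal{M}_P$, and $\mathcal{R}$ to which this lemma is applied are indeed positive and sublinear in the stated sense (the maximal operators trivially, and $\mathcal{R}$ since $|\mathcal{R}(f+g)| \le |\mathcal{R}f| + |\mathcal{R}g|$ by linearity of $\mathcal{R}$ followed by the triangle inequality — here one should read $\mathcal{R}f$ as $|\mathcal{R}f|$ in the hypotheses, or note that the lemma applies to $f \mapsto |\mathcal{R}f|$).
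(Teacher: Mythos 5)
Your proof is correct and is precisely the standard Coifman--Weiss-type limiting argument that the paper invokes without spelling out (it only says the lemma "can be proved by a standard argument"): finite partial sums controlled by sublinearity and the uniform atom bound, $L^1$-convergence of the partial sums, the weak type $(1,1)$ hypothesis to identify $\mathcal{K}f$ as the pointwise limit of $\mathcal{K}(S_N)$ (which on this discrete measure space, where every singleton has positive mass, is indeed everywhere convergence), and Fatou to close. No gaps; your parenthetical that for $\mathcal{R}$ one should apply the lemma to $f\mapsto|\mathcal{R}f|$ is a reasonable observation, though the paper actually handles $\mathcal{R}$ on $H^1_{at}(\mu)$ via the H\"ormander condition rather than through this lemma.
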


Theorem \ref{th1} $i)$ now follows combining Proposition \ref{ub} with Lemma \ref{lemgeneral}. \\ 
We end this section with the proof of Theorem \ref{th1} $ii)$. The kernel ${P}_t(\cdot, \cdot)$ of the Poisson
semigroup $(\mathcal{P}_t)_t$ is given by the following well-known subordination formula 
\begin{align*}
  {P}_t(\cdot,\cdot)=t\int_0^\infty ({4\pi z})^{-1/2}e^{-t^2/(4z)}{H}_z(\cdot,\cdot) \ \frac{dz}{z}.
\end{align*}
We recall that the Poisson maximal operator $\mathcal{M}_P$ is defined by \eqref{defmpt}.
By a change of variables and an application of Fubini-Tonelli's Theorem, it is easily seen that $\mathcal{M}_Pf \le \mathcal{M}_hf$ for any $f \in C(V)$, thus
   $H^1_h(\mu) \subset H^1_P(\mu)$
and Theorem \ref{th1} $ii)$ is  proved.

\section{Proof of Theorem 1.1 iii)}\label{Heat2}
In this section we introduce a sequence of functions $\{g_n\}_n$ and we provide estimates of their norms in $H^1_{at}(\mu)$ and  $H^1_{h}(\mu)$. In particular, we shall obtain that $\|\cdot\|_{H^1_h}$ and $\|\cdot\|_{H^1_{at}}$ are not equivalent norms. By means of the abovementioned  estimates, we construct a function $g$ which belongs to $H^1_h(\mu)$ but which does not belong to $H^1_{at}(\mu).$ Exploiting the inclusion $H^1_{h}(\mu) \subset H^1_P(\mu)$, we will obtain also that $g \in H^1_P(\mu)$. 
 \smallskip\\  We introduce a numeration on the set of vertices of level $0$ as follows.  For all $n \ge 2$ if  $\ell(x)=0$, $x \le p^n(o)$ and $x \not \le p^{n-1}(o)$ we assign to $x$ a unique label $x_i$ with $i  \in [q^{n-1},q^n-1]$. If $x \le p(o)$, then we define $x_0=o$ and the remaining $q-1$ vertices $x_i$ with $i=1,...,q-1$. \\ Define \begin{align}\label{gn} g_n(x)=\delta_{x_n}(x)-\delta_o(x)  \qquad{\forall n \ge 2.}\end{align} Since  $g_n$ is supported in $\{x_n\}\cup\{o\}$ and has zero average for every $n \ge2$, it follows that $g_n \in H^1_{at}(\mu)$.  In order to estimate  $\|g_n\|_{H^1_{at}}$ from below,  we shall construct a function $f \in BMO(\mu)$ and apply \eqref{(fiore)}. Consider the function $f:V \to \mathbb{R}$ defined as follows
\begin{align}\label{fs} f(x)=\begin{cases}n\log q &\text{if $x \le p^n(o), \ x \not \le  p^{n-1}(o), \ \text{and} \  n \ge 2,$} \\   \log q &\text{if $x \le p^{1}(o)$.} \end{cases} \end{align}
\begin{prop}
The function $f$ defined  by \eqref{fs} belongs to $BMO(\mu).$
\end{prop}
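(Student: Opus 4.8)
The plan is to estimate the mean oscillation of $f$ over an arbitrary admissible trapezoid $R \in \mathcal{F}$ and show it is bounded by a constant independent of $R$. First I would observe that $f$ has a very rigid structure: it is constant on each set $W_n = \{x \le p^n(o),\ x \not\le p^{n-1}(o)\}$ (with value $n\log q$ for $n\ge 2$ and $\log q$ on $W_1 := \{x\le p(o)\}$), and these sets are nested "annuli" around the half-infinite geodesic $[o,\xi_0)$. The key geometric fact to extract is that for two vertices $x,y$, one has $|f(x)-f(y)| \lesssim \log q \cdot (\text{number of annuli } W_n \text{ separating them})$, and that number is controlled by $d(x,y)$; more precisely $|f(x)-f(y)| \le (\log q)\, d(x\wedge y\text{-type quantity})$. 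Concretely, if $x,y$ lie in the same $W_n$ then $f(x)=f(y)$, and in general $|f(x) - f(y)| \le (\log q)\, |n_x - n_y|$ where $x \in W_{n_x}$, $y \in W_{n_y}$, and $|n_x - n_y| \le d(x,y)$.

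Next I would fix $R = R_{h'}^{h''}(y_R) \in \mathcal{F}$ (the singleton case being trivial since then the oscillation is zero) and split into two cases according to whether $R$ meets many annuli or few. The crucial point is that $R$ has "width" $h'' - h' \le 11 h'$ in the level variable and its vertices all satisfy $y \le y_R$; so all vertices of $R$ lie within distance $h''$ of $y_R$, hence within $h''$ annuli of each other, giving the crude bound $|f(x) - f_R| \le (\log q)\, h''$ for all $x \in R$. This is only useful when $h''$ is small. When $h''$ is large, I would instead exploit that the level of the confluent $y\wedge o$ (equivalently, which annulus $W_n$ a vertex $y$ belongs to) is roughly constant across $R$: since all $y \in R$ satisfy $y \le y_R$ and $d(y,y_R) < h''$, the index $n_y$ with $y \in W_{n_y}$ takes at most $O(h'')$ values, BUT — and this is the real mechanism — most of the mass of $R$ (a proportion $\ge 1 - 1/q$, by the flow property $\mu(s(x)) = \mu(x)$ and the computation in Lemma \ref{integ}) sits in the single annulus containing $y_R$ itself, because passing to a son never increases $\ell$ past $\ell(y_R)$ and the bulk of $R$ is "below and to the side" of $y_R$. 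I would make this precise by decomposing $R$ along the spheres $S_m(y_R)$ as in the proof of Lemma \ref{integ}, noting that on $S_m(y_R) \cap R$ the value of $f$ differs from $f(y_R)$ by at most $(\log q)\min\{m, \text{depth to } [o,\xi_0)\}$, and the measure of the part where $f$ differs from $f(y_R)$ by exactly $k\log q$ decays like $q^{-k}$ relative to $\mu(R)$. Summing the geometric-type series $\sum_k k q^{-k} \lesssim 1$ then yields $\frac{1}{\mu(R)}\sum_{x\in R}|f(x) - f(y_R)|\mu(x) \lesssim \log q$, and replacing $f(y_R)$ by the average $f_R$ costs only a factor $2$.

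The main obstacle I expect is bookkeeping the geometry of a general admissible trapezoid relative to the annuli $W_n$ — one must carefully identify, for each level $l$ in the range of $R$, how the vertices of $R$ at level $l$ distribute among the annuli $W_n$, and check that the "far-from-$[o,\xi_0)$" vertices (where $f$ is large) are exponentially rare within $R$ with a rate fast enough to beat the linear growth of $f$. The condition $h''/h' \le 12$ is what prevents $R$ from being so "tall" that this decay fails to compensate, so I would keep track of where that ratio bound is used. Once the per-trapezoid bound $\lesssim \log q$ is established uniformly, taking the supremum over $R \in \mathcal{F}$ gives $\|f\|_{BMO} < \infty$, i.e. $f \in BMO(\mu)$.
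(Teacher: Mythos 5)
Your plan is correct and follows essentially the same route as the paper: reduce to trapezoids rooted on $[o,\xi_0)$ (elsewhere $f$ is constant), replace $f_R$ by the constant $f(y_R)=n\log q$ at the cost of a factor $2$, and beat the linear growth $k\log q$ of the oscillation by the exponential decay $q^{-k}$ of the relative measure of the part of $R$ lying $k$ annuli away from the one containing $y_R$, summing $\sum_k k\,q^{-k}<\infty$. One small correction: the ratio condition $2\le h''/h'\le 12$ is never needed here — the per-level bound is $\lesssim (h''-h')^{-1}$ uniformly and there are exactly $h''-h'$ levels, so the estimate closes for arbitrary $h'<h''$.
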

\begin{proof}
 It is easy to see that $f$ is constant on every admissible trapezoid with root not in $[p^2(o),\xi_0)$.
Hence, to prove that $f \in BMO(\mu)$ we have to control the average of $f$ on an admissible trapezoid $R$ with root  in $[p^2(o),\xi_0)$. We claim that it suffices to prove the uniform boundedness of \begin{align*}
    \frac{1}{\mu(R)}\sum_{x \in R} |f(x)-C_R|\mu(x),
\end{align*}  where $C_R$ is a suitable constant depending only on $R$. Indeed, for any $y \in R$
\begin{align*}
    |f(y)-f_R|\le |f(y)-C_R|+|C_R-f_R| \le |f(y)-C_R|+\frac{1}{\mu(R)}\sum_{x \in R} |f(x)-C_R|  \ \mu(x),
\end{align*}
and it follows 
\begin{align*}
   \frac{1}{\mu(R)}\sum_{y \in R} |f(y)-f_R|\mu(y)\le \frac{2}{\mu(R)} \sum_{y \in R} |f(y)-C_R| \mu(y),
\end{align*}
and the last inequality proves the claim. \\ 
Next, we distinguish two cases.
\\{\it Case 1.} Let $R=R_{h'}^{h''}(p^{(n)}(o))$ with  $n \ge h''.$ We shall estimate from above \begin{align*}  &\frac{1}{\mu(R)}\sum_{x \in R} |f(x)-n\log q|\mu(x).
   \end{align*}
  Using the definition of $f$, it is convenient to compute the above sum on each level. Indeed, fix a positive integer $l \in [n-h''+1,n-h']$. Then,
  \begin{align*}
    &\frac{1}{\mu(R)}\sum_{x \in R \cap \ell(x)=l} |f(x)-n\log q| \mu(x)\\ &=\frac{q^{l}}{\mu(R)}\bigg[\bigg((q-1)\sum_{j=l+1}^{n}q^{j-1-l}|j\log q-n\log q|\bigg)  +1\cdot|l\log q -n\log q|\bigg]\\ 
    &\le\sum_{j=l}^{n-1}\frac{q^{l}}{\mu(R)}q^{j-l}(n-j)\log q \\ 
    &=\sum_{j=l}^{n-1}q^{j-n}\frac{(n-j)}{(h''-h')}\log q \\ 
    &\le \sum_{m=1}^{\infty}q^{-m}\frac{m}{h''-h'}\log q.
  \end{align*}
  We get an estimate independent of $l$. Summing over the $h''-h'$ levels which intersect $R$, we conclude that
  \begin{align*}
      \frac{1}{\mu(R)}\sum_{x \in R} |f(x)-n\log q|\mu(x)&=\sum_{l=n-h''+1}^{n-h'}\frac{1}{\mu(R)}\sum_{x \in R \cap \ell(x)=l} |f(x)-n\log q|\mu(x)  \\ &\le (h''-h') \sum_{m=1}^{\infty}q^{-m}\frac{m}{(h''-h')}\log q \\ &\lesssim 1.
  \end{align*} 
{\it Case 2.} Let $R=R_{h'}^{h''}(p^n(o))$ with  $2 \le n<h''$. We can follow the previous argument except for the levels $l \le 0$. Thus, if $0 \ge l \in [n-h''+1,n-h']$ is a fixed level, 
\begin{align*}
    &\frac{1}{\mu(R)}\sum_{x \in R \cap \ell(x)=l} |f-n\log q|\mu(x) \\&=\frac{q^{l}}{\mu(R)}\bigg[\bigg((q-1)\sum_{j=2}^{n}q^{j-1-l}|j\log q-n\log q|\bigg)+q^{1-l}|\log q -n\log q|\bigg]\\
     &\le \sum_{j=1}^{n-1}\frac{q^{l}}{\mu(R)}q^{j-l}(n-j)\log q,
\end{align*}
and we conclude as above. \\ 

This proves that $f \in BMO(\mu)$. 
\end{proof}
\begin{oss}\label{remark} If we take  $n$ such that $q^{m-1} \le  n \le q^m-1$ for $m \ge 2$, then it is easily seen that $|x_n| = 2m \le 2\frac{\log n}{\log q}+2 \lesssim  \log n$, while $f(x_n)= m \log q \ge \log n$.  \\ We also underline that $x_n \wedge o=p^{|x_n|/2}(o)=p^{|x_n|/2}(x_n)$ for all $n \ge 2$.  \end{oss}
 Since $g_n$ is a multiple of a $(1,\infty)-$atom, by \eqref{(fiore)} we get  
\begin{align*}
   \|f\|_{BMO}\|g_n\|_{H^1_{at}} \gtrsim \bigg|\sum_{x \in V} f(x)g_n(x)\mu(x) \bigg| =|f(x_n)-f(o)| \gtrsim \log n,
\end{align*}
which implies that
\begin{align}\label{numero}
    \log n \lesssim \|g_n\|_{H^1_{at}}.
\end{align}
Moreover, it is clear that $\|g_n\|_{1} \approx 1.$
Combining the previous inequalities with the following proposition we conclude that the norms on $H^1_h(\mu)$ and $H^1_{at}(\mu)$ are not equivalent.
\begin{prop}\label{ros2} Let $\{g_n\}_n$ be the sequence defined in \eqref{gn}. Then, the following holds:
    \begin{align*}
        \|\mathcal{M}_hg_n\|_1 \lesssim \log \log n \qquad{\forall n \ge 2.}
    \end{align*}
\end{prop}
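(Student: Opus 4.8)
The plan is to estimate $\|\mathcal{M}_h g_n\|_1$ by splitting the vertex set $V$ into regions according to the geometry of $\supp g_n = \{x_n, o\}$ and the confluent $c := x_n \wedge o = p^{m}(o)$, where $m = |x_n|/2 \lesssim \log n$ (see Remark \ref{remark}). Since $g_n$ is a multiple of a $(1,\infty)$-atom supported on $\{o\} \cup \{x_n\}$ with the $\ell^\infty$-bound $1$ and zero average, we have, for $x$ far from both $o$ and $x_n$,
\begin{align*}
\mathcal{M}_h g_n(x) = \sup_{t>0}|H_t(x,x_n) - H_t(x,o)|,
\end{align*}
and near $o$ and $x_n$ we simply bound $\mathcal{M}_h g_n(x) \le \sup_{t>0}(H_t(x,x_n)+H_t(x,o))$ using \eqref{ineqsquare}. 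The key idea is that the logarithmic loss in $\|g_n\|_{H^1_{at}} \gtrsim \log n$ is an artifact of the atomic norm: the heat maximal function of $g_n$ is much more concentrated, and the total mass of $\mathcal{M}_h g_n$ turns out to grow only like $\log\log n$.

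First I would isolate the "local" contributions. Near $o$: by \eqref{ineqsquare}, $\sup_t H_t(x,o) \lesssim Q(x,o)(d(x,o)+1)^{-2}$, and summing $q^{\ell(x)}$ times this over $V$ via the algorithm of Lemma \ref{integ} gives a convergent series (roughly $\sum_m m/(m+1)^2$ diverges — so one must be more careful and instead note that $\mathcal{M}_h g_n$ is controlled on a small ball $B_{r_n}(o)$ using the $L^2$-boundedness and Cauchy--Schwarz, exactly as in the first paragraph of Proposition \ref{ub}, which contributes $O(1)$ on any ball whose $\mu$-measure is comparable to $\mu(\{o\}) = 1$). The same treatment near $x_n$ contributes $O(1)$. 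The genuinely new estimate is over the complement, where one uses the cancellation: $\mathcal{M}_h g_n(x) = \sup_t|H_t(x,x_n)-H_t(x,o)|$. Here I would further decompose $V$ by the value of $d(x,c)$ and by which of the three "directions" from $c$ the vertex $x$ lies in (toward $o$, toward $x_n$, or away from the geodesic $[o,x_n]$). In the "away" direction $d(x,o)$ and $d(x,x_n)$ differ by exactly $2m$ but their sum is large, so iterating Lemma \ref{ktl} $ii)$ along the path from $o$ (or $x_n$) to the confluent yields
\begin{align*}
\sup_{t>0}|H_t(x,x_n)-H_t(x,o)| \lesssim \sum_{j} \frac{q^{-(\ell(x)+\ell(p^j(\cdot))+d(x,p^j(\cdot)))/2}}{(d(x,\cdot)+1)^3},
\end{align*}
and the telescoping gain of one power of the distance is what ultimately produces the $\log\log n$ rather than $\log n$.

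The main obstacle — and the step I expect to require the most care — is the summation over the region "behind" $o$ and "behind" $x_n$ (i.e. $x \le o$ or $x \le x_n$ but $x$ not on the segment), together with the region between $o$ and $c$. On the segment side the two kernels $H_t(x,o)$ and $H_t(x,x_n)$ need not be close, so cancellation is weak there; one has to balance the pointwise size estimate $\sup_t H_t(x,y)\lesssim Q(x,y)/(d(x,y)+1)^2$ against the exponential decay of $Q$ in the distance. Carrying out the spherical sums via the Lemma \ref{integ} algorithm, each "shell" at distance $k$ from $c$ toward $x_n$ contributes roughly $1/(k+1)^2$ times a factor that, summed over the $m \approx \frac{\log n}{\log q}$ relevant scales, gives $\sum_{k=1}^{m} \frac{1}{k} \approx \log m \approx \log\log n$; collecting all regions then yields the claimed bound. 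I would organize the proof as a case analysis over these four or five regions, handle each with the Lemma \ref{integ} computation adapted to the region's geometry (as the Remark after that lemma warns), and sum the resulting geometric/harmonic series, the dominant one being the harmonic sum of length $m \approx \log n$.
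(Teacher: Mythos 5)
Your overall architecture (split $V$ by the geometry of $\{o,x_n\}$ and the confluent $c=x_n\wedge o=p^{|x_n|/2}(o)$, use size estimates near the support and cancellation far away, and extract a harmonic sum of length $\approx\log n$) matches the paper's, and your identification of the dominant term --- the spherical sums of $\sup_t H_t(\cdot,o)$ and $\sup_t H_t(\cdot,x_n)$ over the $\approx |x_n|\approx\log n$ shells around the support, each shell contributing $\approx 1/k$ via the Lemma \ref{integ} algorithm --- is exactly where the paper gets its $\log\log n$ (its Step 1 over $B(o,|x_n|)$ and Step 2 over $\Gamma_1$). However, your treatment of the far, ``away'' region has two concrete problems. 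First, the geometric claim is wrong: for $x\not\le c$ one has $d(x,o)=d(x,c)+|x_n|/2=d(x,x_n)$, so the two distances are \emph{equal}, not differing by $2m$ (they differ by $2m$ only for $x$ below $o$ or below $x_n$). Since $\ell(o)=\ell(x_n)=0$ and, by \eqref{awad}, $H_t(x,y)$ depends on $y$ only through $\ell(y)$ and $d(x,y)$, this equality forces $H_t(x,x_n)=H_t(x,o)$ identically on $\{x\not\le c\}$: the paper's Step 3 disposes of that entire region with zero contribution and never invokes Lemma \ref{ktl} in this proof.

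Second, the mechanism you propose there --- telescoping Lemma \ref{ktl} $ii)$ along the $2m$-step path from $o$ to $x_n$ and crediting ``the gain of one power of the distance'' for the $\log\log n$ --- does not close as stated. If you bound each of the $2m$ telescoping terms uniformly by $Q/(d(x,c)+1)^3$, the shell at distance $k$ from $c$ contributes $\approx 2m\, k/(k+1)^3\approx m/k^2$, and summing over $k\ge 1$ gives $\approx m\approx\log n$, not $\log\log n$. One can rescue the telescoping by keeping the varying denominators $(d(x,p^j(\cdot))+1)^3$ along the path and splitting into the regimes $k\le m$ (path sum $\approx(k+1)^{-2}$, shell contribution $\approx 1/k$) and $k>m$ (path sum $\approx m(k+1)^{-3}$, shell contribution $\approx m/k^2$), which again totals $\log m+O(1)$ --- but this is more delicate than your sketch indicates, and it is unnecessary given the exact cancellation above. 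In short: the $\log\log n$ comes from the size estimates near the support, not from the telescoping; the far field must be shown to contribute $O(1)$ (indeed $0$), and your stated route to that does not achieve it without the refinement above.
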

\begin{proof}
We split the proof into three steps. \\ 
{\it Step 1.} \\ Define  $B=B(o,|x_n|)$. Our goal is to show that 
\begin{align*}
    \sum_{x \in B} \mathcal{M}_h\delta_{x_j}(x) \mu(x)\lesssim \log \log n
\end{align*}
for $j=0$ and $j=n$. 

 Notice that  for all $x \in V$,  by \eqref{ineqsquare}
\begin{align}\label{fiore}
    &\mathcal{M}_h(\delta_{x_j})(x)\mu(x) = \mu(x)\sup_{t>0} H_t(x,x_j) 
    \lesssim  \frac{Q(x,x_j) \mu(x)}{(d(x,x_j)+1)^2}.
\end{align}
By \eqref{fiore}
\begin{align*}
    \sum_{x \in B} \mathcal{M}_h(\delta_o)(x) \mu(x) &\lesssim \sum_{x\in B} q^{\ell(x)/2}\frac{q^{-|x|/2}}{(|x|+1)^2}. 
\end{align*}We write $B =\cup_{m=0}^{|x_n|} S_m(o)$ and apply Lemma \ref{integ}  to obtain 
 \begin{align}\label{laprima}
         \sum_{x \in B} \mathcal{M}_h(\delta_o)(x) \mu(x)\lesssim \sum_{m=0}^{|x_n|} \frac{1}{m+1} \lesssim \log |x_n| \lesssim \log \log n,
 \end{align}
 where we refer to Remark \ref{remark} for the last estimate.  \\ It remains to prove the same inequality which involves $\mathcal{M}_h(\delta_{x_n})$. Again by \eqref{fiore}
\begin{align*}
    \sum_{x \in B}\mathcal{M}_h(\delta_{x_n})(x)\mu(x)= \sum_{x \in B}\mu(x)\sup_{t>0}|H_t(x,x_n)| \lesssim \sum_{x \in B}\frac{Q(x,x_n) \mu(x)}{(d(x,x_n)+1)^2}.
\end{align*}
Denote by $B^*$ the ball $B(x_n,2|x_n|)$. Clearly, $B \subset B^*$. Hence 
\begin{align*}
      \sum_{x \in B}\mathcal{M}_h(\delta_{x_n})(x)\mu(x)\lesssim \sum_{x \in B^*}\frac{q^{\ell(x)/2} q^{-d(x,x_n)/2}}{(d(x,x_n)+1)^2}.
\end{align*} Exactly as in \eqref{laprima} we get 
\begin{align}\label{numero777}
     \sum_{x \in B}\mathcal{M}_h(\delta_{x_n})(x)\mu(x)\lesssim \sum_{m=0}^{2|x_n|} \frac{1}{m+1} \lesssim \log 2|x_n| \lesssim \log \log n.
\end{align}

This is the desired conclusion. \\ 

 {\it Step 2.} \\ We divide the complement of $B(o,|x_n|)$ in two regions.  
\begin{align*}
    &\Gamma_1=\{x \in B(o,|x_n|)^c \ : \ x\le  p^{|x_n|}(o)\}, \\ 
    &\Gamma_2=\{x \in B(o,|x_n|)^c \ : x \not\in \Gamma_1\}. 
\end{align*}
We claim that 
    \begin{align}\label{doppiastella}
        \sum_{x \in \Gamma_1} \mathcal{M}_h(\delta_o)(x) \mu(x) \lesssim 1.
    \end{align}

 The claim follows by a direct computation. 
Indeed,  we estimate the above sum on $S_m(o) \cap \Gamma_1$ for every $m >|x_n|$ as follows
 \begin{align}{\label{form}}
    \nonumber &\sum_{x \in S_m(o) \cap \Gamma_1} \mathcal{M}_h(\delta_o)(x) \mu(x) \lesssim \sum_{x \in S_m(o) \cap \Gamma_1} \frac{q^{\ell(x)/2-d(x,o)/2}}{(d(x,o)+1)^2}   \\ 
     &=\frac{q^{-m/2}}{m^2}\bigg[(q-1)\sum_{j=1}^{|x_n|}\bigg(q^{m-(j+1)}q^{(2j-m)/2}\bigg) +q^{m/2}\bigg] \lesssim \frac{|x_n|}{m^2}
 \end{align}
 where we integrate adapting  Lemma \ref{integ}. 
 We conclude by observing that 
 \begin{align*}
    \sum_{x \in \Gamma_1}\mathcal{M}_h(\delta_o)(x) \mu(x)&=\sum_{m =|x_n|+1}^\infty\sum_{x \in S_m(o) \cap \Gamma_1} q^{\ell(x)}\mathcal{M}_h(\delta_o)(x) \lesssim \sum_{m =|x_n|+1}^\infty\frac{|x_n|}{m^2} \lesssim 1,
 \end{align*}
 
     and  \eqref{doppiastella} is proved. \\ 
     We now claim that
     \begin{align}\label{triplastella}
         \sum_{x \in \Gamma_1} \mathcal{M}_h(\delta_{x_n})(x) \mu(x) \lesssim \log \log n.
     \end{align}
     For establishing  it, in order to exploit the symmetries of $\mathcal{M}_h(\delta_{x_n})$, it is convenient to  integrate on a larger set than $\Gamma_1$. Define $\Gamma_1^{*}=\{y \in V \ : \ y \not\le p^{|x_n|/2}(o) \}$
\        and observe that if $x \in \Gamma_1\cap \Gamma_1^{*}$ then $d(x_n,x)=d(o,x)$, (because $x_n \wedge o= p^{|x_n|/2}(x_n)=p^{|x_n|/2}(o)$),  thus  $\mathcal{M}_h(\delta_{x_n})(x)=\mathcal{M}_h(\delta_o)(x)$. Obviously $\Gamma_1 \cap ({\Gamma_1^{*}})^c \subset ({\Gamma_1^{*}})^c=\{y \in \ : y \le p^{|x_n|/2}(o)\}.$ \\ It suffices to check that
      \begin{align*}
          \sum_{ x \in ({\Gamma_1^{*}})^c} \mathcal{M}_h(\delta_{x_n})(x) \mu(x) \lesssim  \log \log n.
      \end{align*}
  It is convenient to think of the above sum as the sum over the disjoint sets $\{S_m(x_n) \cap (\Gamma_1^{*})^c\}_{m \ge 0}$. Fix $m \ge 0$ and by applying \eqref{ineqsquare} we obtain
  \begin{align*}
      \sum_{x\in S_m(x_n) \cap (\Gamma_1^{*})^c} \mathcal{M}_h(\delta_{x_n})(x) \mu(x)\lesssim  \sum_{x\in S_m(x_n) \cap (\Gamma_1^{*})^c} q^{\ell(x)/2} \frac{q^{-d(x,x_n)/2}}{(d(x,x_n)+1)^2}.
  \end{align*}
  Assume $m>|x_n|/2$. In the same fashion as we computed in Lemma $\ref{integ}$, we obtain 
  \begin{align*}
      &\sum_{x\in S_m(x_n) \cap {(\Gamma_1^{*})^c}} \mathcal{M}_h(\delta_{x_n})(x) \mu(x)  \\ 
      &\lesssim 
    \frac{q^{-m/2}}{m^2}\bigg[ (q-1)\sum_{j=1}^{|x_n|/2} \bigg(q^{m-(j+1)}q^{(2j-m)/2}\bigg) + q^{m/2}\bigg] \\ 
      &\lesssim \frac{|x_n|/2}{m^2}.
  \end{align*}

If $m<|x_n|/2$, the same computation still works with a slight modification, 
  \begin{align*}
        &\sum_{x \in S_m(x_n) \cap {(\Gamma_1^{*})^c}} \mathcal{M}_h(\delta_{x_n})(x) \mu(x) \lesssim\frac{q^{-m/2}}{m^2}\bigg[ q^{m/2}+ (q-1)\sum_{j=1}^{m-1} \bigg(q^{m-(j+1)}q^{(2j-m)/2}\bigg) + q^{m/2}\bigg] \lesssim \frac{1}{m},
  \end{align*}
  where the first term inside the square brackets is the contribution due to $p^{m}(x_n) \in (\Gamma_1^{*})^c$.
  Summing up over the positive integers, we conclude 
  \begin{align*}
      \sum_{m=1}^\infty \sum_{x \in S_m(x_n) \cap {(\Gamma_1^{*})^c}} \mathcal{M}_h(\delta_{x_n})(x) \mu(x) &\lesssim \sum_{m=1}^{|x_n|/2-1} \frac{1}{m} + \sum_{m=|x_n|/2}^{\infty} \frac{|x_n|/2}{m^2} \\ 
      &\lesssim \log(|x_n|)+ 1 \lesssim \log\log n,
  \end{align*}
 which proves \eqref{triplastella}. \\ 
   
{\it Step 3.} \\ Notice that, if $x \not\le x_n \wedge o=p^{|x_n|/2}(o)$, then $d(x_n,x)=d(x,o)$. This is true because, for such a vertex $x$ 
\begin{align*}
    d(x,o)=d(x,x_n \wedge o)+d(x_n \wedge o,o)=d(x,x_n \wedge o)+d(x_n \wedge o,x_n)=d(x,x_n).
\end{align*}
Observe that this together with \eqref{awad} imply 
\begin{align}\label{pokerstella}
    \nonumber&\sum_{x \in \Gamma_2}  \mathcal{M}_h(\delta_{x_n}-\delta_o)(x) \mu(x) \\ &= \frac{1}{1-b}\sum_{x \in \Gamma_2} q^{\ell(x)/2} \sup_{t>0} {e^{bt/(1-b)}}|q^{-\ell(x_n)/2}h_{t/(1-b)}(x,x_n)-q^{-\ell(o)/2}h_{t/(1-b)}(x,o)| \ dx=0,
\end{align}
since $q^{\ell(x_n)}=q^{\ell(o)}=1$ and $h_{t/(1-b)}(x,y)=h_{t/(1-b)}(d(x,y))$. \\ 
In conclusion, \eqref{laprima},  \eqref{numero777}, \eqref{doppiastella}, \eqref{triplastella} and \eqref{pokerstella} yield
\begin{align*}
    \|\mathcal{M}_h g_n\|_1 \lesssim \log \log n.
\end{align*}
\end{proof}
It follows that \begin{align*}
   \lim_{n \to \infty}  \frac{\|g_n\|_{H^1_h}}{\|g_n\|_{H^1_{at}}} = 0,
\end{align*}
and in particular, $\| \cdot \|_{H^1_{h}}, \ \|\cdot \|_{H^1_{at}}$ are not equivalent. \\ We are now ready to prove Theorem \ref{th1} $iii)$. 
\begin{proof}[Proof of Theorem \ref{th1} iii)] Define the function $g$ on the set of vertices at level $0$  as $g(o)=c_0$,  $g(x)=0$ if $x \le p^{1}(o)\setminus \{o\}$ and $g(x_n)= \frac{1}{n (\log n)^{3/2}}$ for every $n \ge q$.  Then we extend $g=0$ outside the level zero. Choose $c_0$ such that $\sum_{x \in V} g(x) \mu(x)=0.$ Clearly, 
    \begin{align*}
        \|g\|_1= |c_0|+\sum_{n=q}^\infty \frac{1}{n(\log n)^{3/2}} <+\infty.
    \end{align*}  We now show that $\|\mathcal{M}_hg\|_1$ is finite. Indeed, we observe that
    \begin{align*}
        g=\sum_{k=q}^\infty c_k g_k, 
    \end{align*}
    where $\{g_k\}_k$ is defined  in \eqref{gn} and $c_k$ is the value of $g$ at $x_{k}$.  Then, by using Proposition \ref{ros2}
    \begin{align*}
        \|\mathcal{M}_hg\|_1\lesssim \sum_{k=q}^\infty c_k \log \log k \lesssim \sum_k \frac{\log\log k}{k(\log k)^{3/2}}<+\infty.
    \end{align*} This implies that $g \in H^1_h(\mu).$ \\ 
    We now prove that $g \not \in H^1_{at}(\mu)$. Indeed, suppose the converse by contradiction. Then it would be 
    \begin{align}\label{false}
        \sum_{x \in V} g(x) f(x) \mu(x)<+\infty,
    \end{align}
    where $f$ is the BMO function defined in  \eqref{fs}. But using the estimate $f(x_n) \ge \log n $ (see Remark \ref{remark}), \eqref{false} would imply 
    \begin{align*}
        \sum_{n=q}^\infty \frac{1}{n(\log n)^{1/2}}<+\infty,
    \end{align*}
    which is clearly false. Then $g \not \in H^1_{at}(\mu).$
\end{proof}

  \section{Proof of Theorem \ref{thm2}}\label{Riesz} This last section is devoted to the proof of Theorem \ref{thm2}. We briefly recall some preliminary notion. \\\  
   We define the discrete Riesz transform  $\mathcal{R}= \nabla \mathcal{L}^{-1/2}$,
    which corresponds to the integral operator with integral kernel  with respect to $\mu$ $$R(x,y)=\int_0^{+\infty} t^{-1/2} (H_t(x,y)-H_t(p(x),y))\ dt.$$ 
    
   Recall that the Riesz Hardy space is defined by \eqref{numeroRiesz}. It is a well-known fact that $\mathcal{R}$ maps $H^1_{at}(\mu)$ to $L^1(\mu)$, indeed, it is an easy consequence of the discrete version of H\"ormander's condition for singular operators (see \cite[Th. 3]{ATV1} or \cite{LSTV} and \cite{hs}). Thus, the inclusion $H^1_{at}(\mu) \subset H^1_{R}(\mu)$ is trivial. 
        \\ In order to show that such inequality is strict, we need the following result.  
        \begin{prop}\label{r0gn} The following holds
        \begin{align*}
            \|\mathcal{R}g_n\|_1 \lesssim \log \log n \qquad{\forall n \ge 2,}
        \end{align*}
            where $\{g_n\}_n$ is the sequence defined in \eqref{gn}.
        \end{prop}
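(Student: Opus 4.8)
The plan is to mimic the structure of the proof of Proposition \ref{ros2}, since the Riesz kernel $R(x,y)=\int_0^\infty t^{-1/2}(H_t(x,y)-H_t(p(x),y))\,dt$ is controlled by essentially the same geometric quantities as the heat maximal operator. First I would split the integral over $t$ into the local part $\int_0^1$ and the global part $\int_1^\infty$. For the local part, the combinatorial factor $(et/(d+1))^{d}$ coming from $s_n(t)$ with $t\in(0,1)$ forces super-exponential decay in $d(x,y)$, so the argument of Proposition \ref{blmh} applies verbatim and yields an $L^1(\mu)$-bounded operator; hence $\bigl\|\int_0^1 t^{-1/2}(H_t(\cdot,x_n)-H_t(p(\cdot),x_n))\,dt\bigr\|_1\lesssim 1$ for each of the two Dirac masses composing $g_n$, and likewise for $\delta_o$. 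So the whole contribution of $\int_0^1$ is $O(1)$, and it remains to handle $\mathcal{R}^{\mathrm{glob}}g_n$ with kernel $R^{\mathrm{glob}}(x,y)=\int_1^\infty t^{-1/2}(H_t(x,y)-H_t(p(x),y))\,dt$.

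For the global part I would invoke Lemma \ref{lemma2}: combining $i)$ and $ii)$ there, $|R^{\mathrm{glob}}(x,y)|\lesssim Q(x,y)/(d(x,y)+1)^2$ whenever $y\not\le x$, and when $y\le x$ one estimates $|H_t(x,y)-H_t(p(x),y)|\le H_t(x,y)+H_t(p(x),y)$ and uses Lemma \ref{lemma2} $ii)$ again together with $\int_1^\infty t^{-1/2}H_t\,dt$-type bounds coming from \eqref{ineqsquare} — in all cases the pointwise bound $|R^{\mathrm{glob}}(x,y)|\lesssim Q(x,y)/(d(x,y)+1)^2$ holds (this is exactly the bound that drove the estimate \eqref{fiore} in Proposition \ref{ros2}). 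With this pointwise bound in hand, $\bigl|\mathcal{R}^{\mathrm{glob}}\delta_{x_j}(x)\bigr|\mu(x)\lesssim Q(x,x_j)\mu(x)/(d(x,x_j)+1)^2$, which is precisely the right-hand side of \eqref{fiore}. I would then re-run Steps 1–3 of the proof of Proposition \ref{ros2} word for word: on the ball $B=B(o,|x_n|)$ Lemma \ref{integ} gives $\sum_{x\in B}\lesssim \log|x_n|\lesssim\log\log n$ for both $\delta_o$ and $\delta_{x_n}$; on the region $\Gamma_1$ the same adaptation of Lemma \ref{integ} gives $O(1)$ for $\delta_o$ and, after enlarging to the symmetric set $(\Gamma_1^*)^c=\{y\le p^{|x_n|/2}(o)\}$ and using that $d(x,x_n)=d(x,o)$ off $\Gamma_1^*$, gives $O(\log\log n)$ for $\delta_{x_n}$.

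The one genuinely new point is Step 3, i.e. the cancellation on $\Gamma_2=\{x\in B(o,|x_n|)^c:\ x\le p^{|x_n|/2}(o)\}$: for such $x$ one has $d(x,x_n)=d(x,o)$, and I claim moreover $d(p(x),x_n)=d(p(x),o)$, because $p(x)$ still lies above the confluent $p^{|x_n|/2}(o)=x_n\wedge o$ (here I would use that $x\in\Gamma_2$ is at level $<0$ is not automatic, but $x\le p^{|x_n|/2}(o)$ does force $p(x)\le p^{|x_n|/2}(o)$ as well, so $p(x)\wedge x_n=p(x)\wedge o=x_n\wedge o$). By \eqref{awad}, $H_t(\cdot,x_n)$ and $H_t(\cdot,o)$ differ only through the radial factor $h_{t/(1-b)}$ evaluated at equal distances and through the prefactor $q^{-\ell(x_n)/2}=q^{-\ell(o)/2}=1$; hence $H_t(x,x_n)-H_t(p(x),x_n)=H_t(x,o)-H_t(p(x),o)$ pointwise in $t$, so $R(x,x_n)=R(x,o)$ and $\mathcal{R}g_n$ vanishes identically on $\Gamma_2$. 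The main obstacle, as in Proposition \ref{ros2}, is bookkeeping the sphere sums via Lemma \ref{integ} in the two regimes $m\lessgtr|x_n|/2$ and checking that the extra contribution from the single vertex $p^m(x_n)\in(\Gamma_1^*)^c$ only adds a harmless $1/m$ term; summing these produces $\sum_{m<|x_n|/2}1/m+\sum_{m\ge|x_n|/2}(|x_n|/2)m^{-2}\lesssim\log|x_n|+1\lesssim\log\log n$, which together with the $O(1)$ local part gives the claim.
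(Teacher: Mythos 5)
Your outline follows the paper's proof quite closely (same split of the kernel at $t=1$, the local part absorbed by Proposition \ref{blmh}, Lemma \ref{lemma2} for the global part, and the cancellation between the two Dirac masses away from the cone below $x_n\wedge o$), but it contains one genuine error at a load-bearing point. The asserted uniform bound $|R^{(\infty)}(x,y)|\lesssim Q(x,y)/(d(x,y)+1)^2$ is \emph{false} when $y\le x$, i.e.\ on the ancestor ray of $y$. Lemma \ref{lemma2} $ii)$ only gives $\int_1^\infty t^{-1/2}H_t(x,y)\,dt\lesssim Q(x,y)/(d(x,y)+1)$ (note the built-in extra factor $(d(x,y)+1)^{-1}$ in its statement), and the difference $H_t(x,y)-H_t(p(x),y)$ exhibits no cancellation there: the two terms carry prefactors $Q(x,y)$ and $Q(p(x),y)=q^{-1}Q(x,y)$, so the difference is comparable to the larger term and the correct bound is $Q(x,y)/(d(x,y)+1)$, one power short of what you claim. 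This is not cosmetic: since $\mu(p^j(x_n))\,Q(p^j(x_n),x_n)/(j+1)=1/(j+1)$, one has $\mathcal{R}\delta_{x_n}\notin L^1(\mu)$, so any scheme that estimates $\mathcal{R}^{(\infty)}\delta_{x_n}$ and $\mathcal{R}^{(\infty)}\delta_o$ separately over all of $V$, as your ``re-run Steps 1--3 word for word'' suggests, must be checked carefully on the ancestors. Your argument survives only because the cancellation region removes all ancestors beyond the confluent, leaving a geodesic segment of length $O(|x_n|)$ on which the weaker bound still sums to $\sum_{j\le|x_n|}1/(j+1)\approx\log|x_n|\lesssim\log\log n$. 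This is exactly the paper's term $I_1^2$, which is treated separately (via Lemma \ref{lemma2} $ii)$ and the identity $Q(x,x_n)=q^{-d(x,x_n)}$ on $\Gamma_2=\{x\in E_n:\,x_n\le x\}$) precisely because the $y\not\le x$ estimate of Lemma \ref{lemma2} $i)$ does not apply there. So you need to split off the ancestor ray and run a first-order (not second-order) estimate on it; once you do, your bookkeeping closes.

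Two smaller points. First, the paper organizes the global part the other way around: it applies the cancellation \emph{first}, reducing the sum to $E_n=\{x\le x_n\wedge o\}$, and only then estimates the two kernels separately on $E_n=\Gamma_1\cup\Gamma_2$; this automatically truncates the ancestor ray at length $|x_n|/2$ and avoids the trap above. Second, your description of the cancellation region has the inequality inverted: the radial identity $d(x,x_n)=d(x,o)$ (hence $H_t(x,x_n)=H_t(x,o)$) holds for $x\not\le x_n\wedge o=p^{|x_n|/2}(o)$, not for $x\le p^{|x_n|/2}(o)$, and the implication you need is $x\not\le x_n\wedge o\Rightarrow p(x)\not\le x_n\wedge o$ (true, since $p(x)\le c$ would force $x\le p(x)\le c$); the statement ``$x\le p^{|x_n|/2}(o)$ forces $p(x)\le p^{|x_n|/2}(o)$'' is false (take $x=p^{|x_n|/2}(o)$) and in any case is not the implication the argument requires.
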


  \begin{proof}
  We write 
  \begin{align*}
      R(x,y)&=\int_0^1 t^{-1/2} (H_t(x,y)-H_t(p(x),y))\ dt+ \int_1^{\infty} t^{-1/2} (H_t(x,y)-H_t(p(x),y))\ dt \\ &=R^{(0)}(x,y)+R^{(\infty)}(x,y)
  \end{align*}
  and consequently $\mathcal{R}=\mathcal{R}^{(0)}+\mathcal{R}^{(\infty)}$.
It follows from Proposition \ref{blmh} that  $\mathcal{R}^{(0)}$ is bounded on $L^1(\mu),$ hence $\|\mathcal{R}^{(0)}g_n\|_1 \lesssim 1.$ We now consider $\|\mathcal{R}^{(\infty)}g_n\|_1$. We recall that
    \begin{align*}
        \|\mathcal{R}^{(\infty)}g_n\|_1&=\sum_{x \in V} \bigg| \sum_{y \in V} \int_1^\infty t^{-1/2} (H_t(x,y)-H_t(p(x),y)) \ dt g_n(y) \mu(y) \bigg|\mu(x) \\ 
        &= \sum_{x \in V}  \bigg|\int_1^\infty t^{-1/2} (H_t(x,x_n)-H_t(x,o)+H_t(p(x),o)-H_t(p(x),x_n)) \ dt  \bigg|  \mu(x).
    \end{align*}
Arguing as in  Step 3 of Proposition \ref{ros2}, we get that, if $x \not \le x_n \wedge o$, the first difference inside the integral in the last line vanishes. The same happens for the second difference if  $p(x) \not \le x_n \wedge o$. Since $$\{x \in V \ : x \not\le x_n \wedge o\} \subset \{x \in V \ : \ p(x)  \not\le x_n \wedge o\},$$ we can estimate the previous sum as follows
    \begin{align*}
        &\|\mathcal{R}^{(\infty)}g_n\|_1 
       \\&\le\sum_{x \in E_n} \int_1^\infty \frac{|H_t(x,x_n)-H_t(p(x),x_n)|}{t^{1/2}} \ dt\ \mu(x)+ \sum_{x \in E_n}\int_1^{\infty}\frac{|H_t(x,o)-H_t(p(x),o))|}{t^{1/2}} \ dt \ \mu(x)\\ 
        &= I_1+I_2,
    \end{align*} where $E_n=\{x \in V \ : \ x \le x_n\wedge o\}.$ Observe that $E_n=\Gamma_1\cup\Gamma_2=\Sigma_1\cup\Sigma_2,$ where
    \begin{align*}
        &\Gamma_1=\{x \in E_n \ : x_n \not \le x\}, \\ 
         &\Gamma_2=\{x \in E_n \ : \   x_n \le x
        \}, \\ &\Sigma_1=\{x \in E_n  \ : o \not \le x\}, \\ 
        &\Sigma_2=\{x \in E_n \ : o \le x\}.
    \end{align*}We start studying $I_1$. Exploiting the symmetry of the problem, the same computations are valid for $I_2$. It can be useful to split the sum which defines $I_1$ as 
    \begin{align*}
     I_1=\sum_{i=1}^2\sum_{x \in \Gamma_i}\int_1^{\infty}\frac{|H_t(x,x_n)-H_t(p(x),x_n))|}{t^{1/2}} \ dt \ \mu(x)=I_1^1+I_1^2.
    \end{align*}
    By Lemma \ref{lemma2} $i)$,
    \begin{align*}
        I_1^1 \lesssim \sum_{x \in \Gamma_1} \frac{Q(x,x_n)}{(d(x,x_n)+1)^2} \mu(x).
    \end{align*}
     Since $x_n \wedge o=p^{|x_n|/2}(x_n),$ we can think of the sum on $\Gamma_1$ as the sum on the sequence of disjoint sets $\{\Gamma_1^j\}_{j=0}^{|x_n|/2}$,  where $\Gamma_1^j$ is defined by
     \begin{align*}
        \Gamma_1^j= \begin{cases}  \{x \le x_n\} &\text{if $j=0$}, \\ 
         \{x \le p^j(x_n) \ {\text{and}} \ x \not\le p^{j-1}(x_n)\} &\text{if $1 \le j\le |x_n|/2$},
         \end{cases}
     \end{align*}
    with $p^0(x_n)=x_n.$ 
     Observe that, for any $j=1,...,|x_n|/2$, $x \in\Gamma_1^j$ implies that
    \begin{align*}
    d(x,x_n)=2j-\ell(x),
    \end{align*}
    where we have used that $\ell(p^{j}(x_n))=j$. Then, for any $1 \le j \le |x_n|/2$ 
    \begin{align*}
        &\sum_{x \in \Gamma_1^j} \frac{ q^{\ell(x)/2-d(x,x_n)/2}}{(d(x,x_n)+1)^2} \le \sum_{l=-\infty}^j q^{l-j} \frac{1}{(2j-l)^2} (q-1)q^{j-l-1}     \le \frac{2}{j},
    \end{align*}
    where $(q-1)q^{j-l-1}$ corresponds to the cardinality of vertices in $\Gamma_1^j$ at the level $l$. The sum over $\Gamma_1^0$ contributes to the sum as a constant independent of $n$. Summing up 
    \begin{align*}
        I_1^1 \lesssim \sum_{j=1}^{|x_n|/2} \frac{1}{j} \lesssim \log \log n.
    \end{align*}
     It remains to estimate $I_1^2$.
   \  By Lemma \ref{lemma2} $ii)$ and the fact that if $x \in \Gamma_2$, then $\ell(x)=d(x,x_n)$ and $$Q(x,x_n)=qQ(p(x),x_n)=q^{-d(x,x_n)},$$ we get
    \begin{align*}
        I_1^2 &\le \sum_{x \in \Gamma_2}  \int_1^\infty t^{-1/2} \max\{H_t(x,x_n),H_t(p(x),x_n)\} \ dt \ \mu(x) \\&\lesssim \sum_{x \in \Gamma_2}  \frac{q^{-d(x,x_n)}}{d(x,x_n)+1} \mu(x) = \sum_{d=1}^{|x_n|/2}  \frac{1}{d} \lesssim \log \log n.
    \end{align*}
  Similar computations can be repeated to estimate $I_2$ if we replace $\Gamma_i$ by $\Sigma_i$. In conclusion
    \begin{align*}
        \|\mathcal{R}g_n\|_1 \lesssim \log \log n,
    \end{align*}
    as required.
      \end{proof}
      We conclude the proof of Theorem \ref{thm2}.
    \begin{proof}[Proof of Theorem \ref{thm2} ii)] Let $g$ be the function constructed in the proof of Theorem \ref{th1} $iii)$. Then, 
    \begin{align*}
        \|\mathcal{R}g\|_1 \lesssim \sum_{k=q}^\infty c_k\|\mathcal{R}g_k\|_1\lesssim \sum_{k=q}^\infty \frac{\log \log k}{k(\log k)^{3/2}}<+\infty.
    \end{align*}
 Hence $g \in H^1_{R}(\mu)$ but $g \not\in H^1_{at}(\mu).$
    \end{proof}
    \begin{oss}It is not clear whether $H^1_h(\mu),H^1_P(\mu)$ and $H^1_R(\mu)$ are the same space or not. This is an interesting open problem that we
have not been able to answer and leave for further work.    
    \end{oss}
    \bigskip
	
	{\bf{Acknowledgments.}} The author would like to thank Maria Vallarino for fruitful
conversations and comments. The author is very grateful to the anonymous referee for
her/his helpful report.
	Work partially supported by the MIUR project ``Dipartimenti di Eccellenza 2018-2022" (CUP E11G18000350001) and the Progetto GNAMPA 2020 ``Fractional Laplacians and subLaplacians on Lie groups and trees". 
	
	The author is member of the Gruppo Nazionale per l'Analisi Matema\-tica, la Probabilit\`a e le loro Applicazioni (GNAMPA) of the Istituto Nazionale di Alta Matematica (INdAM).
    
    \bibliographystyle{plain}
{\small
\bibliography{references}}
\end{document}